\numberwithin{equation}{section}
\newcommand{\Ss}{\mathbb{S}}
\newcommand{\R}{\mathbb{R}}
\newcommand{\Sym}{\text{Sym}}
\newcommand{\disp}[1]{\displaystyle{#1}}
\newcommand{\epsi}{\varepsilon}
\newcommand{\LL}{{\mathcal{L}}}
\newcommand{\1}{\mathds{1}}
\newcommand{\al}{\al}
 \newcommand{\Id}{\mathrm{Id}}
\newcommand{\te}{\theta}
\newcommand{\lr}[1]{\langle #1 \rangle}
\newcommand{\FF}{\mathcal{F}}
\newcommand{\GG}{\mathcal{G}}
\newcommand{\PP}{\mathcal{P}}
\newcommand{\supp}{\mathrm{supp}}
\newcommand{\Z}{\mathbb{Z}}
\newcommand{\Aff}{\text{Aff}}
\newcommand{\matrice}[1]{\left(\begin{matrix}
#1
\end{matrix} \right)}
\newcommand{\spann}{\text{span}}
\newcommand{\TT}{\mathcal{T}}
\newcommand{\EE}{\mathcal{E}}
\newcommand{\Ii}{\mathcal{I}}
\newcommand{\MM}{\mathcal{M}}
\newcommand{\SSS}{\mathcal{S}}
\newcommand{\tT}{\widetilde{T}}
\newcommand{\HH}{\mathcal{H}}
\newcommand{\az}{\alpha}
\newcommand{\lv}{\left\vert}
\newcommand{\rv}{\right\vert}
\newcommand{\Hh}{\mathbb{H}}
\newcommand{\DD}{\mathcal{D}}
\newcommand{\PPP}{\mathcal{P}}
\newcommand{\system}[1]{\left\{\begin{matrix}
#1
\end{matrix} \right.}
\newcommand{\RR}{\mathcal{R}}
\newcommand{\Gg}{\mathbb{G}}
\newcommand{\gtu}{g_n^1}
\newcommand{\gtd}{g_n^2}
\newcommand{\RRR}{\mathcal{R}}
\title[Quantitative form of certain $k$-plane transform inequalities.]{Quantitative form of certain $k$-plane transform inequalities.}
\author{Alexis Drouot}
\email{alexis.drouot@gmail.com}
\address{Department of mathematics, University of California, Berkeley, CA.}
\newtheorem{thm}{Theorem}
\newtheorem{cor}[thm]{Corollary}
\newtheorem{lem}{Lemma}[section]
\newtheorem{proposition}[lem]{Proposition}
\newtheorem{theorem}[thm]{Theorem}
\begin{document}

\begin{abstract} Let $d \geq 2$ and $1 \leq k \leq d-1$. The $k$-plane transform satisfies some $L^p \rightarrow L^q$ dilation-invariant inequality. In this case the best constant and the extremizers are explicitly known. We give a quantitative form of the inequality with respect to these extremizers, that works for $k=d-1$ and for $k \neq d-1$ while restricted to radial functions.
\end{abstract}

\maketitle

\section{Introduction and statement of results}
Consider $d \geq 2$, $1 \leq k \leq d-1$ and define $\GG_{k,d}$ the set of $k$-planes in $\R^d$ passing through the origin and $\MM_{k,d}$ the set of affine $k$-planes in $\R^d$. If $f$ is a smooth compactly supported function on $\R^d$, we define the $k$-plane transform of $f$ as the map
\begin{equation*}
\RR f : \MM_{k,d} \rightarrow \R
\end{equation*}
defined by
\begin{equation*}
\RR f(\Pi) = \int_\PP f(x) d\lambda_\Pi(x)
\end{equation*}
where $d\lambda_\Pi$ is the measure on $\Pi$ induced by the Lebesgue measure on $\R^d$.\\ 

Provide the space $\GG_{k,d}$ with its unique rotation-invariant probability measure. This measure induces a unique translation-invariant measure on $\MM_{k,d}$ -- for details see \cite{Mattila}. The question of the continuity of $\RR$ on Lebesgue spaces has been discussed by many authors, see \cite{ObeSte}, \cite{Drury1}, \cite{Christ1}, \cite{Drury}. The optimal result, obtained in \cite{Christ1} and whose proof was simplified in \cite{Drury}, is the following. Define
\begin{equation}\label{eq:pq}
p = \dfrac{d+1}{k+1}, \ \ q = d+1.
\end{equation} 
The $k$-plane transform extends to a continuous operator from $L^1(\R^d)$ to $L^1(\MM_{k,d})$ and from $L^p(\R^d)$ to $L^q(\MM_{k,d})$. Every other extension on Lebesgue spaces is obtained from interpolation between these two.\\

Baernstein and Loss formulated in \cite{BaeLos} a conjecture concerning the norm of $\RR$ on these spaces. The conjecture has been answered positively in the endpoint case, see \cite{Christ4} for $k=d-1$ and \cite{Drouot} for any value of $k$. The result in \cite{Christ4} result gives the complete characterization of extremizers while the method employed in \cite{Drouot} does not. However, Drouot's result was nicely completed by \cite{Flock} in the following theorem:

\begin{theorem}\label{thm:mapR} (Christ, Drouot, Flock) For $p,q$ given by \eqref{eq:pq} the following inequality holds:
\begin{equation}\label{eq:mapR}
\dfrac{|\RR f|_q}{|f|_p} \leq A_\RR, \ \ A_\RR =  \left(2^{k-d} \dfrac{|\Ss^k|^d}{|\Ss^d|^k} \right)^\frac{1}{d+1}.
\end{equation}
Moreover, equality is realized if and only if $f$ has the form
\begin{equation}\label{eq:hg}
f(x) = C\left(\dfrac{1}{1+|Lx|^2} \right)^\frac{k+1}{2}
\end{equation}
where $L$ is any invertible affine map on $\R^d$, and $C$ is a nonzero constant. 
\end{theorem}

Equation \eqref{eq:hg} proves that the group 
\begin{equation*}
\Gg = \R-\{ 0 \} \times \left( \text{Gl}(\R^d)/O(d)\right) \ltimes \R^d
\end{equation*}
acts freely and transitively on extremizers. Here $\text{Gl}(\R^d)$ is the group of all linear invertible maps on $\R^d$, $O(d)$ is the subgroup of all orthogonal map, and the action $\ltimes$ is induced while seeing the group $\Aff(\R^d)$ of invertible affine maps on $\R^d$ as $\text{Gl}(\R^d) \ltimes \R^d$. If $\psi = (C,L) \in \Gg$ and $f \in L^p(\R^d)$ we define
\begin{equation*}
\psi \star f = C|\det(L)|^{1/p} f \circ L
\end{equation*}
that is, the range of an element of $L^p$ under the natural action of an element of $\Gg$. We also define $h_0$ the normalized extremizer given by
\begin{equation}\label{eq:extrpa}
h_0(x) = c_d\left(\dfrac{1}{1+|x|^2} \right)^{\frac{k+1}{2}},  \ \ \ c_d = \left( |S^d| \right)^{-1/p}.
\end{equation}
This extremizer has $L^p(\R^d)$-norm $1$.

The fact that it has been possible to identify all extremizers relies heavily on the presence of a high-dimension group of symmetries. The approach developed in \cite{Christ4} and \cite{Drouot}, although quite different, cannot be adapted to the interpolated inequalities: they are not conformally invariant. The conformal invariance of the inequality \ref{eq:mapR} was expressed in \cite{Drury}, and rediscovered in \cite{Christ4} -- we will give more details later. We happen to be in the same mysterious situation of the Hardy-Littlewood-Sobolev inequality: while a lot of authors found by different methods the sharp inequality in the conformally invariant case, none was able to find the best constant for the non-conformally invariant case (i.e, the interpolated inequalities). Even in restricted particular cases it would be of outstanding mathematical interest if someone could solve this long-standing problem!\\

In this paper, we are interested in the stability of Theorem \ref{thm:mapR}. More precisely, if $f \in L^p(\R^d)$ has norm $1$, and if $|\RR f|_q$ has large norm, how close is $f$ from extremizers? This question already has been answered in a non-quantitative way in \cite{Christ3} for $k=d-1$. In this paper we aim to respond to this question in a quantitative way. It should be noted that even the non-quantitative result is far from being a trivial problem: if $h_0$ is an extremizer and $L_n \in \Aff(\R^d)$ form an unbounded sequence, then $L_n \star h_0$ has no chance to converge in $L^p$. 

\subsection{Main results} Our results are the following:

\begin{theorem}\label{thm:global} There exists a constant $c_\RRR$ such that the following is valid. Assume that $k=d-1$, or that $f$ is a radial function. Define
\begin{equation*}
d(f,\EE) = \inf_{h \in \EE} |f - h|_p.
\end{equation*}
Then
\begin{equation*}
\dfrac{|\RRR f|_q}{|f|_p}\leq A_\RRR \left( 1 - c_\RRR \left( \dfrac{d(f,\EE)}{|f|_p} \right)^2 \right) .
\end{equation*}
\end{theorem}

Theorem \ref{thm:global} will follow from Theorem \ref{thm:precompact} and Theorem \ref{thm:local} below and from the non-quantitative analysis developed in \cite{Christ3}. We say that a sequence of functions $f_n \in L^p(\R^d)$ is an extremizing sequence for \eqref{eq:mapR} if it satisfies 
\begin{equation*}
|f_n|_p = 1, \ \ \lim_{n \rightarrow \infty} |\RR f_n|_q = A_\RR.
\end{equation*}

\begin{theorem}\label{thm:precompact} Let $f_n$ be an extremizing sequence of radial functions. Then there exists a subsequence $f_{\varphi(n)}$ and a sequence of numbers $\lambda_{\varphi(n)}$ such that the sequence
\begin{equation*}
\lambda_{\varphi(n)}^{d/p} f\left(\lambda_{\varphi(n)} \cdot \right)
\end{equation*}
converges in $L^p(\R^d)$ to $h_0$.
\end{theorem}

As $f_n$ is assumed to be radial, and as all radial extremizers are obtained from $h_0$ under dilations, it suffices to prove that that $f_n$ converges in $L^p$ after rescaling and extraction of a subsequence. This theorem is non quantitative in the following way: it says that there exists a function $\epsi \mapsto \delta(\epsi)$ defined near $0$, with 
\begin{equation*}
\lim_{\epsi \rightarrow 0} \delta(\epsi) = 0
\end{equation*}
such that for $\epsi$ small enough
\begin{equation*}
\dfrac{|\RR f|_q}{|f|_p} \geq A_\RR(1-\epsi)
\end{equation*}
implies 
\begin{equation*}
\dfrac{d(f,\EE)}{|f|_p} \leq \delta(\epsi). 
\end{equation*}
In the Theorem below we prove that we can actually take $\delta(\epsi) \sim \epsi^{1/2}$.

\begin{theorem}\label{thm:local} There exist some positive constants $\lambda_\RRR, \epsi_0$ such that the following is valid. For all $1 \leq k \leq d-1$,
\begin{equation*}
\dfrac{|\RRR f|_q}{|f|_p}\leq A_\RRR \left( 1 - \lambda_\RRR \left( \dfrac{d(f,\EE)}{|f|_p} \right)^2 + o\left(\left( \dfrac{d(f,\EE)}{|f|_p} \right)^{2+\epsi_0}\right)\right).
\end{equation*}
\end{theorem}

Note that this theorem is valid without any radial assumption. The missing ingredient to remove the radial restriction for $k \neq d-1$ in Theorem \ref{thm:global} is a precompactness theorem for general extremizing sequences, as proved in \cite{Christ3}. However, Theorem \ref{thm:global} might be useful as a tool in itself to prove a more general statement -- see the approach developed in \cite{CiaFusMagPra} for the Sobolev inequality.\\

We end this introduction by some historical references. Since the pioneering work of Bianchi and Egnell \cite{BiaEgn} for the Sobolev inequality, statements like Theorem \ref{thm:global} have received a lot of interest. Among other, see \cite{CheFraWet} for a very clear and enlightening treatment of the fractional Sobolev inequality, \cite{CiaFusMagPra} for an extension of Bianchi and Egnell's result that does not require a precompactness result, and \cite{Christ5} for an outstanding treatment of the Hausdorff-Young inequality elegantly using deep combinatorics theorems.

The paper is divided into two fairly independent parts. The first one, from Section \ref{sec2} to \ref{sec:4}, deals with the proof of Theorem \ref{thm:precompact}. The second one, from Section \ref{sec:12} to \ref{sec:15}, deals with the proof of Theorems \ref{thm:global} and \ref{thm:local}.

\subsection{Outline of the proof of Theorem \ref{thm:precompact}.}
As carefully explained in \cite{Drouot}, restraining the inequality \eqref{eq:mapR} to radial functions is equivalent to consider the simpler one-dimensional operator $\TT$, formally defined as
\begin{equation*}
\TT f(r) = \disp{ \int_0^\infty f(\sqrt{r^2+s^2}) s^{k-1}ds = \int_r^\infty f(u) (u^2-r^2)^{k/2-1} u du}.
\end{equation*}
This operator maps $L^p((0,\infty),r^{d-1} dr)$ to $L^q((0,\infty),r^{d-k-1}dr)$. The corresponding inequality is
\begin{equation}\label{eq:map2}
| \TT f |_{L^q((0,\infty),r^{d-k-1}dr)} \leq A_\TT | f |_{L^p((0,\infty),r^{d-1 }dr)}
\end{equation}
for an optimal constant $A_\TT$ that has been computed in \cite{Drouot}. The group of dilation is a non-compact group preserving \eqref{eq:map2}. Although $\TT$ enjoys a much simpler behavior than the $k$-plane transform, the reader should keep in mind its connection to $\RRR_k$. The $k$-plane transform has a geometric origin that provides a nice and simple geometric interpretation of the action of $\TT$. This has guided our intuition all along the coming Lemma and proofs. Theorem \ref{thm:precompact} is rigorously equivalent to:

\begin{theorem}\label{thm:precompact2} 
Let $1 \leq k \leq d-1$ and $f_n$ be an extremizing sequence for \eqref{eq:map2}. Then $f_n$ is relatively compact in $L^p(r^{d-1}dr)$ modulo the group of dilations, that is, there exists a sequence $\lambda_{\varphi(n)} \in \R$ such that the rescaled sequence 
\begin{equation*}
\lambda_\varphi(n)^{d/p} f_{\varphi(n)}(\lambda_{\varphi(n)} \cdot )
\end{equation*} 
converges strongly in $L^p(\R^d)$.
\end{theorem}

Since Christ thoroughly studied the case $k=d-1$ in \cite{Christ3} and \cite{Christ4} in the proof we will restrict our attention to the case $d \geq 3$.\\

The strategy is outlined as follows. Everything must be understood modulo successive extractions of subsequences. At first we focus on proving that every nonincreasing extremizing sequence converges (after appropriate and necessary rescaling) to an extremizer. Such an analysis finds its origin in \cite{Lieb}.

Let $f_n$ be \textit{any} radial extremizing sequence. $f_n$ admits a weak limit, called $f$, and our goal is to prove that $f$ is actually a strong limit. We thus would like to use a compact operator to transform this weak limit into a strong limit. Of course, here the natural operator to use is $\TT$. We are missing an assumption: $\TT$ as defined earlier is not compact. Its kernel
\begin{equation*}
k(r,s) = \1_{s \geq r} (s^2-r^2)^{k/2-1} s
\end{equation*}
admits essentially two singularities, one for $r=s$ when $k=1$, and another one for $s=\infty$. As a consequence, we define a truncated operator,
\begin{center}
$\begin{array}{ccccc}
& \TT_R : & L^\infty([0,R]) & \longrightarrow & L^q([0,R])\\
& & f & \longmapsto & \TT \1_{[0,R]} f = \TT f. \end{array}$
\end{center}
In this setting, since we artificially erased the singularities, for any $0<R < \infty$, $\TT_R$ is compact -- see the appendix.\\

Write now $f_n = g_n^m + \epsi_n^m$ with $g_n^m := \1_{[0,m]} \1_{\{ f_n \leq m\}} f_n$. We want to take the limit of this equality as $n,m$ tends to infinity. Let $\TT$ acts on this equality:
\begin{equation*}
\TT f_n = \TT_m g_n^m + \TT \epsi_n^m.
\end{equation*}
It would be convenient to be able to take the limit $n \rightarrow \infty$ \textit{then} $m \rightarrow \infty$ for $\TT_m g_n^m$ and  $m \rightarrow \infty$ \textit{then} $n \rightarrow \infty$ for $\TT \epsi_n^m$. This is possible if we can prove the two following facts:
\begin{enumerate}
\item[$(i)$] $h^m$ converges in $L^q$ as $m \rightarrow \infty$, where $h^m$ is the strong limit of $\TT_m g_n^m$; 
\item[$(ii)$] $\epsi_n^m$ is uniformly small in $n$ as $m$ tends to infinity. 
\end{enumerate}
$(i)$ is the easiest, it follows from the structure of $\TT$. It will be proved in section \ref{sec:4}. $(ii)$ is actually much harder and will require two uniform bounds on $f_n$. These bounds can be obtained only after an appropriate rescaling of the sequence $(f_n)$. We split the proof of $(ii)$ into two parts:
\begin{enumerate}
\item In section \ref{sec2}, we rescale our extremizing sequence to make it converge weakly to a non-zero function. We will deduce the bound
\begin{equation*}
\disp{ \lim_{m \rightarrow \infty} \int_{|f_n| \geq m} |f_n|^p = 0 },
\end{equation*}
uniformly in $n$. This bound means that the rescaled sequence has no part that could converge to some kind of Dirac distribution.
\item In section \ref{sec:3} we use the concentration-compactness principle of Lions \cite{lions} to prove a strong, and uniform in $n$, localization result on the extremizing sequence, expressed as
\begin{equation*}
\disp{ \lim_{m \rightarrow \infty} \int_{r \geq m} |f_n|^p = 0 }.
\end{equation*}
\end{enumerate}

\subsection{Outline of the proofs of Theorem \ref{thm:local} and \ref{thm:global}} The natural approach to Theorem \ref{thm:local} is to write a second variation formula at an extremizer of the functional
\begin{equation*}
f \mapsto \dfrac{|\RRR f|_q}{|f|_p}
\end{equation*} and hope that the Hessian is definitive negative. As it stands this approach is a little too naive: the extremizers of \eqref{eq:mapR} are not isolated points in $L^p(\R^d)$ and thus the Hessian is only nonnegative. A less naive approach consists in saying that the Hessian is negative on the set orthogonal to extremizers -- whatever this means in $L^p$ when $p \neq 2$. This is, in a local sense precised in Section \ref{sec:15}, correct.\\

Our analysis consists in three steps. In Section \ref{sec:12}, we first recall a very interesting result from \cite{Drury} that expresses the conformal invariance of the inequality \eqref{eq:mapR}. Using a generalized Riemann projection we can see \eqref{eq:mapR} as a rotation-invariant inequality for functions on Grassmanian manifolds. Using Theorem \ref{thm:mapR} we give the sharp form of this inequality. We identify all extremizers, which form a smooth manifold of dimension
\begin{equation*}
\dfrac{(d+1)(d+2)}{2}.
\end{equation*}
This manifold contains the constant function $\1$, that is by its simplicity of special interest. We identify the tangent space $T_\1\EE$ at this function, which corresponds to first and second order harmonics on the Grassmannian $\GG_{1,d+1}$. The space $T_\1 \EE$ consists of functions lying in the Hilbert space $\HH = L^2(\GG_{1,d+1})$.\\

In Section \ref{sec:13}, we introduce the quadratic form $Q_\1$ we must study to prove \ref{thm:local}. This form -- defined on $\HH$ -- induces a bounded selfadjoint operator $\LL$ on $\HH$ that is rotation invariant. As $\GG_{1,d+1}$ is a homogeneous space it is a function of the Laplacian $\Delta_{\GG_{1,d+1}}$ and thus we can perform a complete spectral study of $\LL$. In particular using the minmax principle we will be able to prove that $Q_\1$ is negative on $T_\1 \EE^\perp$, the orthogonal of $T_\1 \EE$ in $\HH$.\\

In Section \ref{sec:14} we use results of \cite{Christ5} to rule out some technical difficulty: if $p < 2$, $L^p(\GG_{1,d+1})$ cannot be seen as a subspace of $\HH$. This induces a smoothness defect in the Taylor development of the function
\begin{equation*}
t \mapsto \dfrac{|\RR(h_0+tg)|_q}{|h_0+tg|_p}.
\end{equation*}
Because of Christ's result,this defect happens to be always of the right sign. It thus happens not to be an obstacle for our analysis.\\

In the last section we prove Theorems \ref{thm:global} and \ref{thm:local}.

\subsection{Notations} 
\begin{itemize}
\item  Let $A$ and $B$ be positive functions and $P$ be some statement. We will say that $P$ implies that $A \lesssim B$ when there exists a universal constant $C$, which depends only on $k$ and $d$, such that $P$ implies that $A \leq C B$. $A \gtrsim B$ will be the convert and $A \sim B$ will be used when $A \lesssim B$ and $B \lesssim A$.
\item $| f |_p$ denotes the $L^p$-norm of $f$, with respect to a contextual measure.
\item For $f$ measurable, we denote $f^\ast$ the radial, nonincreasing rearrangement of $f$, with respect to a contextual measure. It is the only radial nonincreasing function whose level sets have the same measure as the level sets of $f$, see \cite{Grafakos} for a more complete introduction.
\item We will denote $|E|$ the measure of a set $E$, with respect to a measure depending on the context.
\item For two sets $E, F \subset \R$, $d(E,F)$ denotes the standard distance between $E$ and $F$. If $R >0$, $d(R,E)$ denotes the distance between $[0,R]$ and $E$.
\item $\Aff(\R^d)$ is the group of all affine maps $\R^d \rightarrow \R^d$.
\item $\Sym(\R^d)$ is the set of linear symmetric maps of $\R^d$.
\item The Japanese bracket of $x \in \R^d$ is $\lr{x} = \sqrt{1+|x|^2}$.
\end{itemize}

Our purpose is to prove Theorem \ref{thm:precompact2} for \textit{a subsequence of an extremizing sequence $f_n$}. To simplify the notations, we will still call $f_n$ any subsequence that is extracted from $f_n$.

\subsection*{Acknowledgment} The author would like to thanks Professor Micha\"el Christ for stimulating discussions, especially for pointing out \cite{Drury1} and \cite{Christ5}. The author would also like to thanks the anonymous referee for suggesting that Theorem \ref{thm:precompact} should imply a quantitative version of Theorem \ref{thm:mapR}.

\section{Precompactness of radial, extremizing sequences.}\label{sec:2'} 
Let us recall a few things. The distribution function of $f \in L^p$ is defined by
\begin{equation*}
df(t) = \left| \left\{ |f| \geq t \right\} \right|.
\end{equation*}
If $1 \leq a,b \leq \infty$, we define $|\cdot|_{a,b}$ the Lorentz norm of order $(a,b)$ as 
\begin{equation*}
|f|_{a,b} =  \left(  b \int_0^\infty  \left(df(t)^{1/a} t \right)^b \dfrac{dt}{t} \right)^{1/b}, 
\end{equation*}
with obvious correction if $a$ or $b$ is infinity. The Lorentz space $L^{a,b}$ is simply the space of functions having finite Lorentz norm of order $(a,b)$. See \cite{Grafakos} for a more complete introduction.

Because of \cite{Christ1}, Theorems A and B, the operator $\TT$ is continuous from $L^{p,q}$ to $L^q$. Consider $f$ a $L^p$-function satisfying 
\begin{equation*}
|f|_p = 1, \ \ |\TT f|_q \gtrsim 1.
\end{equation*}
Then
\begin{equation*}
1 \lesssim |\TT f|_q \lesssim |f|_{p,q} \lesssim |f|_p^{p/q} |f|_{p,\infty}^{1-p/q} = |f|_{p,\infty}^{1-p/q}.
\end{equation*}
In the last inequality we used interpolation between Lorentz spaces. It follows that
\begin{equation}\label{eq:kvc}
|f|_{p,\infty} = \sup_{t > 0} t df(t)^{1/p} \gtrsim 1.
\end{equation}
We are now ready to prove the following:

\begin{proposition}\label{prop:pni} Assume that $f_n$ is an extremizing sequence of nonincreasing functions for \eqref{eq:map2}. Then modulo extraction of a subsequence there exists $\lambda_n$ such that
\begin{equation*}
\lambda_n^{d/p} f_n(\lambda_n \cdot)
\end{equation*}
converges in $L^p$.
\end{proposition}

\begin{proof} As $f_n$ is an extremizing sequence by \eqref{eq:kvc} there exists $t_n$ such that
\begin{equation*}
t_n df_n(t_n)^{1/p} \gtrsim 1.
\end{equation*}
We recall that $df(t)$ is computed using the measure $r^{d-1}dr$. Using that $f_n$ is nonincreasing it follows that there exists $\lambda_n$ such that
\begin{equation*}
f_n \gtrsim \lambda_n^{-d/p} \1_{[0,\lambda_n]}.
\end{equation*}
Define $g_n = \lambda_n^{d/p} f_n(\lambda_n\cdot)$. Then $g_n \gtrsim \1_{[0,1]}$ and therefore it has admits a subsequence that converges to a non-zero function $g \in L^p$. $g_n$ is an extremizing sequence of nonincreasing functions for \eqref{eq:map2}. By Helly's principle we can assume that both $g_n$ and $\TT g_n$ (which is nonincreasing because of the structure of $\TT$) converge almost everywhere respectively to $g$ and $\TT g$. Apply \cite{Lieb}, Lemma $2.7$ to conclude: $g_n$ converges in $L^p$ to an extremizer.
\end{proof}

The next sessions are devoted to the much harder case where $f_n$ is not assumed to be nonincreasing.

\section{A concentration-compactness result}\label{sec2}
We first want to prove the following: 

\begin{lem}\label{lem:fund} Let $f_n$ be an extremizing sequence for \eqref{eq:map2}. There exists $R_0$ such that the following is satisfied.  For all $n$, there exist a set $E_n \subset \R$, and $\lambda_n \in \R$, such that if
\begin{equation*}
g_n = \lambda_n^{d/p} \star f_n(\lambda_n\cdot),
\end{equation*}
then:
\begin{enumerate}
\item $|E_n| \sim 1$; 
\item $g_n \geq \1_{E_n}$;
\item $E_n \subset [0,R_0]$.
\end{enumerate}
\end{lem}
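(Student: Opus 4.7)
The plan is to combine a symmetrization reduction with Theorem~\ref{thm:extrem}(i) to pinpoint a good rescaling, and then identify $E_n$ with a suitable subset of a level set of $g_n$.

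First, by the Riesz-Sobolev inequality for $\TT$ applied to radial functions (see \cite{drouot}), the nonincreasing rearrangement $f_n^*$ with respect to the measure $r^{d-1}dr$ satisfies $\|\TT f_n\|_q \leq \|\TT f_n^*\|_q$, and $\|f_n^*\|_p = \|f_n\|_p$. Hence $(f_n^*)$ is itself an extremizing sequence. Applying Theorem~\ref{thm:extrem}(i) to $(f_n^*)$, after passing to a subsequence we obtain dilations $\lambda_n > 0$ such that
\begin{equation*}
\tilde{g}_n(r) := \lambda_n^{d/p} f_n^*(\lambda_n r) \longrightarrow h \quad \text{strongly in } L^p((0,\infty), r^{d-1}dr),
\end{equation*}
where $h$ is a nonincreasing radial extremizer. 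Using the explicit form of $h$ from Theorem~\ref{thm:extrem}(ii), I multiply $\lambda_n$ by a single fixed constant so that $h \geq 2$ on an interval $[0, R_1]$ and $h \leq 1/2$ on $[R_0, \infty)$ for some $0 < R_1 \leq R_0$ depending only on $h$.

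Setting $g_n(r) := \lambda_n^{d/p} f_n(\lambda_n r)$, the nonincreasing rearrangement commutes with $L^p$-preserving dilations, so $(g_n)^{\ast} = \tilde{g}_n$ and the two sequences share the same distribution function. After extracting a further subsequence so that $\tilde{g}_n \to h$ pointwise a.e., monotonicity forces $\tilde{g}_n \geq 1$ on $[0, R_1]$ for $n$ large, giving
\begin{equation*}
|\{g_n \geq 1\}| = |\{\tilde{g}_n \geq 1\}| \geq \frac{R_1^d}{d} \sim 1.
\end{equation*}

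The main obstacle will be condition~$(3)$: while $\{\tilde{g}_n \geq 1\}$ automatically lies in $[0, R_0]$, the level set $\{g_n \geq 1\}$ has only the same measure and could, a priori, be distributed over arbitrarily large radii. Overcoming this requires showing that the $L^p$-mass of $g_n$ is essentially contained in $[0, R_0]$ uniformly in $n$. My approach would be to exploit the extremizing assumption $\|\TT g_n\|_q \to B(k,d)$: decomposing $g_n = g_n \1_{[0,R_0]} + g_n \1_{[R_0,\infty)}$ and analyzing $\|\TT g_n\|_q$ via Minkowski and H\"older together with the asymptotics of the kernel $(u^2-r^2)^{k/2-1} u$ (which is nearly constant in $r$ for $u \gg r$), one aims to show that a nontrivial $L^p$-mass beyond $R_0$ would force $\|\TT g_n\|_q \leq B(k,d) - \epsilon$, contradicting extremality. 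With this localization in hand, Chebyshev's inequality applied to the tail of $\{g_n \geq 1\}$ then yields $E_n := \{g_n \geq 1\} \cap [0, R_0]$ with $|E_n| \sim 1$.
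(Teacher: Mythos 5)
The first half of your argument (rearrangement, applying Theorem~\ref{thm:extrem}(i), identifying the level set $\{g_n\geq 1\}$ with $|\{g_n\geq 1\}|\sim 1$) is essentially the same road the paper takes, and it is sound.

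The gap is in the final step, and it is the crux of the lemma. You correctly identify condition~(3) as the real difficulty, but the proposed resolution does not hold up. Decomposing $g_n = g_n\1_{[0,R_0]} + g_n\1_{[R_0,\infty)}$ and hoping that ``Minkowski and H\"older together with kernel asymptotics'' will show $\|\TT g_n\|_q$ drops below $B(k,d)$ when the tail carries nontrivial mass aims at a much stronger statement than what is needed, and one that cannot be obtained by such a direct argument: the inequality \eqref{eq:map2} is dilation-invariant, so a generic function with $L^p$-mass far from the origin (e.g.\ a rescaled extremizer pushed out by a large dilation) has $\|\TT\cdot\|_q$ as large as you like. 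Nothing about the location of the mass alone forces the norm down. What you would actually be proving here is the uniform tightness $\lim_{R\to\infty}\sup_n\int_R^\infty |g_n|^p = 0$ --- and in the paper this is the \emph{output} of Section~\ref{sec:3}, which relies on Lions' dichotomy and the weak-interaction Lemma~\ref{lem:weak}, and which in turn already \emph{uses} the present lemma as input. So your step~5 is either unprovable by the stated elementary means, or circular.

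The paper sidesteps this by making a much more targeted move. Instead of trying to localize all of the $L^p$-mass of $g_n$, it only removes the far portion $F_n := \EE_n\setminus[0,R]$ of the \emph{level set} $\EE_n$, which by construction has measure $O(1)$. The key technical input is Lemma~\ref{lem:concentr} (and its $k=1$ twin, Lemma~\ref{lem:concentr1}): for a set $F$ of measure $\delta$ disjoint from $[0,R]$, one has the explicit bound $\|\TT\1_F\|_q\lesssim\delta R^{-d/p}$, obtained from a pointwise estimate on $\TT\1_F$. Setting $h_n = g_n - \1_{F_n}$, one then has $\|h_n\|_p^p\leq 1-\delta_n(R)$ while $\|\TT h_n\|_q\geq \|\TT g_n\|_q - 2\delta_n R^{-d/p}$; plugging into the extremal inequality and letting $R\to\infty$ forces $\liminf_n\delta_n(R)\to 0$. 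This argument works precisely because the quantity being estimated is $\|\TT\1_F\|_q$ for a set of \emph{bounded measure} far from the origin, not the generic $\|\TT(g_n\1_{[R_0,\infty)})\|_q$, which cannot be bounded without further structure. Your proposal is missing this targeted removal idea together with the explicit pointwise estimate on the kernel that makes it quantitative.
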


This lemma is an improvement of \eqref{eq:kvc}. There are essentially three things to identify: $\lambda_n$, $E_n$, and $R_0$. 

\begin{proof} Consider $f_n^\ast$ the radial nonincreasing rearrangement of $f_n$. Using Theorem D in \cite{Christ1}, $f_n^\ast$ is still an extremizing sequence. By Proposition \ref{prop:pni} there exists $\lambda_n$ such that
\begin{equation*}
\lambda_n^{d/p} f_n^\ast(\lambda_n \cdot) 
\end{equation*}
converges to an extremizer. Define now $g_n = \lambda_n^{d/p} f_n(\lambda_n \cdot)$. As $g_n$ and $g_n^\ast$ have the same $L^{p,\infty}$-norm and $g_n^\ast$ converges to an extremizer there exists a set $\EE_n$ with measure of order $1$ such that $g_n \geq \1_{\EE_n}$. Denote $e=|\EE_n|>0$. This number $e$ can be chosen independently of $n$ since a lower bound on $|\EE_n|$ exists.\\

All that remain to be shown is that the sets $\EE_n$ have most of their weight uniformly close from 0. Let us chose $R >0$ and call $F_n = \EE_n \setminus [0,R]$, $\delta_n(R) = |F_n|$. Assume $\liminf_n \delta_n(R)$ is bounded from below when $R \rightarrow \infty$. Then $g_n$ cannot be an extremizing sequence. Indeed consider $ h_n = g_n - \1_{F_n} $. We have:
\begin{equation*}
| h_n |_p^p = \disp{ \int_0^\infty [g_n-\1_{F_n}]^p \leq \int_0^\infty g_n^p - \1_{F_n} \leq 1-\delta_n(R) }.
\end{equation*}
On the other hand,
\begin{equation*}
| \TT h_n |_q \geq | \TT g_n |_q - | \TT \1_{F_n} |_q.
\end{equation*}
Thus we need to give upper bounds on $| \TT \1_{F_n} |_q$. We distinguish the case $k=1$ and $k \geq 2$. Let us start with $k \geq 2$.

\begin{lem}\label{lem:concentr}
Let $k\geq 2$, $R > 0$ and $F \subset \R$ such that $|F| = \delta$ and $d(0,F) \geq R$. Then
\begin{equation*}
| \TT \1_F |_q \leq 2 \delta R^\frac{d(k-d)}{d+1}.
\end{equation*}
\end{lem}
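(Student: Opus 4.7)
My plan is to prove a pointwise bound on $\TT\1_F(r)$ and then integrate it in $L^q$. The key exploit is that for $k \geq 2$ the exponent $k/2-1$ is nonnegative, so the crude inequality $(u^2-r^2)^{k/2-1} \leq u^{k-2}$ holds for all $u \geq r \geq 0$. Plugging this into the defining integral of $\TT$ gives
$$ \TT\1_F(r) \leq \int_{F\cap[r,\infty)} u^{k-1}\, du. $$

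Next I would use the hypotheses $F \subset [R,\infty)$ and $|F| = \int_F u^{d-1}\, du = \delta$ (the contextual measure here is $u^{d-1}\,du$, consistent with the computation of $\|h_n\|_p^p$ made just above Lemma \ref{lem:concentr}). Writing $u^{k-1} = u^{k-d}\cdot u^{d-1}$ and observing that on the region of integration one has $u \geq \max(r,R)$ while $k-d<0$, the factor $u^{k-d}$ is maximized at $u=\max(r,R)$. This yields the clean pointwise bound
$$ \TT\1_F(r) \leq \max(r,R)^{k-d}\,\delta. $$

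To finish, I split the $L^q(r^{d-k-1}\,dr)$-integral at $R$. On $[0,R]$ the bound is the constant $R^{k-d}\delta$; on $[R,\infty)$ it is $r^{k-d}\delta$, which is integrable against $r^{d-k-1}$ because $q(k-d)+d-k-1 = d(k-d)-1 < -1$. The choice $q=d+1$ makes the exponents of $R$ match exactly: both contributions evaluate to a multiple of $\delta^q R^{d(k-d)}$. Adding them,
$$ \|\TT\1_F\|_q^q \leq \delta^q R^{d(k-d)}\cdot\frac{d+1}{d(d-k)}, $$
and since $d\geq 3$ and $k\leq d-1$ the constant factor is at most $(d+1)/d \leq 2$. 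Taking $q$-th roots gives $\|\TT\1_F\|_q \leq 2\delta R^{d(k-d)/(d+1)}$ (in fact with a constant comfortably below $2$). I don't foresee a real obstacle: the argument is essentially a direct computation, made clean by the nonnegativity of $k/2-1$; this is precisely the feature that breaks down for $k=1$, where the singularity of the kernel on the diagonal $u=r$ forces a separate and more delicate treatment. The only thing to check with care is the bookkeeping of exponents, which collapses thanks to $q=d+1$.
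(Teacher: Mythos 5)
Your argument is correct and is essentially the same as the paper's: the key pointwise bound $\TT\1_F(r)\le \max(r,R)^{k-d}\delta$ is obtained exactly as in the paper (extract $u^{d-1}$ to produce $|F|$, use $(u^2-r^2)^{k/2-1}\le u^{k-2}$ which requires $k\ge 2$, then $u^{k-d}\le\max(r,R)^{k-d}$), and the split of the $L^q$-integral at $r=R$ with the exponent bookkeeping $q(k-d)+d-k=d(k-d)$ is likewise identical. Your version is marginally sharper on the numerical constant (you track $\frac{d+1}{d(d-k)}$ instead of crudely bounding each piece by $1$), but this changes nothing substantive.
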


\begin{proof}
We start by a pointwise estimate: let $r>0$, we want upper bounds on $\TT \1_F(r)$. 
\begin{align*}
\TT \1_F (r) & = \disp{ \int_{u \geq \max(r,R)} \1_F (u) (u^2-r^2)^{k/2-1} u du } \\
     & = \disp{ \int_{u \geq \max(r,R)} \1_F (u) u^{d-1} (u^2-r^2)^{k/2-1} u^{2-d} du }\\
     & \leq \max(r,R)^{k-d} \delta .
\end{align*}
Here we used $k\geq 2$. Thus
\begin{align*}
| \TT \1_F |_q^q  & \leq \disp{ \int_{r\leq R} [R^{k-d} \delta]^q r^{d-k-1}dr  + \int_{r\geq R} (r^{k-d} \delta)^q r^{d-k-1}dr}\\
    & \leq 2 \delta^q R^{d(k-d)}.
\end{align*}
This immediately leads to
\begin{equation*}
| \TT \1_F |_q \leq 2 \delta R^\frac{d(k-d)}{d+1} = 2 \delta R^{-d/p}.
\end{equation*}
\end{proof}

A similar lemma for $k=1$ is the following:

\begin{lem}\label{lem:concentr1}
Let $k=1$, $R \geq 1$, $F$ a measurable set with $|F| \sim 1$ such that $d(0,F) \geq R$. Then
\begin{equation*}
| \TT \1_F |_q \lesssim R^{-1/q}.
\end{equation*}
\end{lem}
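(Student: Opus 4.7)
My plan is to mirror the proof of Lemma \ref{lem:concentr}, with the added twist of controlling the kernel singularity $(u^2-r^2)^{-1/2}$ at $u=r$ that appears when $k=1$. First, since the $L^p$ spaces in the paper carry the weight $r^{d-1}dr$, the hypothesis $|F|\sim 1$ reads $\int_F r^{d-1}dr\sim 1$; combined with $F\subset[R,\infty)$, this yields the Lebesgue estimates $|F|_{\mathrm{Leb}}\lesssim R^{1-d}$ and $\int_F u\,du\lesssim R^{2-d}$.

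I would then split $\|\TT\1_F\|_q^q=\int_0^\infty|\TT\1_F(r)|^q r^{d-2}\,dr$ according to whether $r$ is well separated from $F$. In the \emph{far region} $r\leq R/2$, every $u\in F$ satisfies $u\geq 2r$, so $(u^2-r^2)^{-1/2}u\leq 2/\sqrt{3}$, and hence $\TT\1_F(r)\lesssim|F|_{\mathrm{Leb}}\lesssim R^{1-d}$. The contribution from this region is bounded by $R^{q(1-d)+(d-1)}=R^{-d(d-1)}$, negligible for $d\geq 3$.

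The \emph{near region} $r>R/2$ is more delicate. I would use the substitution $v=u^2-r^2$ (so $u\,du=dv/2$) to rewrite
\[\TT\1_F(r)=\frac{1}{2}\int_{\Phi_r(F)}v^{-1/2}\,dv,\qquad \Phi_r(F):=\{u^2-r^2:u\in F,\,u\geq r\}.\]
The set $\Phi_r(F)$ has Lebesgue measure $\leq 2\int_F u\,du\lesssim R^{2-d}$ and lies inside $[v_0(r),\infty)$, where $v_0(r):=\max(R^2-r^2,0)$. Since $v^{-1/2}$ is non-increasing, placing $\Phi_r(F)$ as an interval with left endpoint $v_0(r)$ gives the sharp bound
\[\TT\1_F(r)\lesssim\sqrt{v_0(r)+|\Phi_r(F)|_{\mathrm{Leb}}}-\sqrt{v_0(r)},\]
which interpolates between the saturated estimate $\TT\1_F(r)\lesssim R^{(2-d)/2}$ near $r=R$ (where $v_0=0$) and the linearized estimate $\TT\1_F(r)\lesssim R^{2-d}/\sqrt{R^2-r^2}$ for $r$ safely below $R$, the transition occurring at $R-r\sim R^{1-d}$. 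A careful integration of $|\TT\1_F(r)|^q r^{d-2}$ over $r>R/2$, splitting at the transition point and summing the pieces, yields a contribution bounded by $R^{-d(d-1)/2}$. Combined with the far region this gives $\|\TT\1_F\|_q^q\lesssim R^{-d(d-1)/2}\leq R^{-1}$ for $d\geq 3$, which is stronger than the claim.

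The main obstacle is the singular near region: a direct H\"older or Minkowski argument fails because $(u^2-r^2)^{-q/2}$ is not integrable in $r$. The substitution $v=u^2-r^2$ reduces the singularity to the integrable power $v^{-1/2}$, and the hypothesis $F\cap[0,R]=\emptyset$ enters precisely through the lower bound $v_0(r)=\max(R^2-r^2,0)$ on the support of $\Phi_r(F)$, which is what produces the decay in $R$.
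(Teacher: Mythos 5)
Your proposal takes a genuinely different route from the paper. The paper first proves the bound when $F$ is a single interval $(\rho,\rho+\delta)$, then extends to countable unions of intervals by a mass-transport (``sticking'') argument showing that moving a component of $F$ closer to the origin can only \emph{increase} $\|\TT\1_F\|_q$, and finally passes to general measurable sets by outer regularity. Because the single-interval case has compactly supported $\TT\1_F$, the paper never has to face the tail $r>\sup F$. Your approach is a direct pointwise-and-integrate computation. Unfortunately it has a real gap precisely where the paper's reduction is doing the work.

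The gap is in the region $r>R$. There $v_0(r)=0$, so your bound degenerates to $\TT\1_F(r)\lesssim\sqrt{|\Phi_r(F)|_{\rm Leb}}$, and the best decay extractable from the hypotheses alone is $|\Phi_r(F)|_{\rm Leb}=2\int_{F\cap[r,\infty)}u\,du\lesssim r^{2-d}|F|\lesssim r^{2-d}$ (the constant bound $\lesssim R^{2-d}$ you quote is even worse). This gives $\TT\1_F(r)\lesssim r^{(2-d)/2}$, hence
\begin{equation*}
\int_R^\infty|\TT\1_F(r)|^q\,r^{d-2}dr\ \lesssim\ \int_R^\infty r^{\frac{q(2-d)}{2}+d-2}\,dr=\int_R^\infty r^{-\frac{(d-1)(d-2)}{2}}\,dr,
\end{equation*}
which for $d=3$ (so $q=4$) is $\int_R^\infty r^{-1}dr=\infty$. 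Since $F$ is only assumed measurable with $|F|\sim1$ in the $r^{d-1}dr$ measure and $F\subset[R,\infty)$, it may well be unbounded (e.g.\ a union of shrinking intervals at dyadic scales $2^nR$), so $\TT\1_F$ does not vanish on $(R,\infty)$ and this region cannot be ignored. Your claimed total bound $R^{-d(d-1)/2}$ therefore does not follow from the argument as written: the tail piece, even when it converges ($d\geq4$), contributes only $R^{1-(d-1)(d-2)/2}=R^{-d(d-3)/2}$, which is strictly larger. For $d\geq4$ this is still $\lesssim R^{-1}$ so the lemma survives, but for the case $d=3$ --- the most delicate one, since $d=2$ is excluded --- the proof as sketched is incomplete. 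To close it you would need a genuinely finer treatment of the tail, for instance exploiting that $|\Phi_r(F)|_{\rm Leb}$ is not merely $\lesssim r^{2-d}$ but is controlled by the decaying tail mass $\int_{F\cap[r,\infty)}u^{d-1}du$ in a way that beats the logarithmic borderline; or you could adopt the paper's rearrangement reduction, which sidesteps the unbounded-support issue entirely.
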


The proof is essentially in three steps: first we prove it when $F$ is an interval, then when $F$ is a countable union of intervals, and at last when $F$ is any measurable set.

\begin{proof} Let $\rho \geq R$ and $F$ be the set $(\rho,\rho+\delta)$. Then
\begin{equation*}
\delta \rho^{d-1} \sim |F| \sim 1 . 
\end{equation*}
Moreover,
\begin{equation*}
\TT \1_F(r) \leq \sqrt{(\rho+\delta)^2 - \rho^2} \sim \sqrt{\delta \rho}.
\end{equation*}
Let us look at $|\TT \1_F |_q^q$; choose $1 \geq \epsi \geq \delta$. We can cut the integral into two parts: 
\begin{align*}
| \TT \1_F |_q^q & = \disp{ \int_0^{\rho-\epsi} |\TT \1_F|^q + \int_{\rho-\epsi}^{\rho+\delta} |\TT \1_F|^q } \\
                   & \lesssim \disp{ \left(\int_0^{\rho-\epsi} |\TT \1_F|^q \right) + (\delta \rho)^{q/2} \rho^{d-2} \epsi }.
\end{align*}
Now we give estimates on
\begin{equation*}
\int_0^{\rho-\epsi} |\TT \1_F|^q .
\end{equation*}
On $(0,\rho-\epsi)$, 
\begin{align*}
\TT \1_F (r) & \leq  \TT\1_F(\rho-\epsi) \\
             & \leq \sqrt{(\rho+\delta)^2 - (\rho-\epsi)^2} - \sqrt{\rho^2 - (\rho-\epsi)^2} \\
             & \leq \sqrt{ 2 \rho \delta + \delta^2 + 2 \rho \epsi - \epsi^2 } - \sqrt{ 2 \rho \epsi - \epsi^2 } \\
             & \leq ( 2 \rho \delta + \delta^2 - \epsi^2 + \epsi^2 ) \cdot \dfrac{1}{2 \sqrt{2 \rho^2 \epsi - \epsi^2}}.
\end{align*}
Since $\delta \leq \epsi \leq 1 \leq \rho$, $\rho \epsi \geq \epsi^2$ and $\rho \delta \geq \delta^2$. As a consequence,
\begin{equation*}
\TT \1_F (r)  \lesssim \frac{\rho \delta}{\sqrt{\rho \epsi}}.
\end{equation*}
Thus
\begin{align*}
| \TT \1_F |_q^q & \lesssim (\delta \rho)^{q/2} \rho^{d-2} \epsi + \left(\sqrt{\rho\epsi} \dfrac{\delta}{\epsi} \right)^q \rho^{d-1} \\
& \lesssim  (\rho^{2-d})^{q/2-1} \epsi + \left(\sqrt{\rho} \dfrac{\rho^{1-d}}{\sqrt{\epsi}} \right)^q \rho^{d-1}.    
\end{align*}
Now it is time to precise the value of $\epsi$. We recall that $q \geq 2$. Thus
\begin{equation*}
| \TT \1_F |_q^q \lesssim  \epsi + \left( \rho^{3/2-d}\epsi^{-1/2} \right)^q \rho^{d-1}.
\end{equation*}
Now let us chose $\epsi \sim 1/\rho$. This leads to
\begin{equation*}
| \TT \1_F |_q^q \lesssim  1/\rho + \rho^{(1-d)(q-1)} \lesssim 1/R.
\end{equation*}
This proves the first step: the lemma is true when $F$ is an interval.\\

Let us consider now a set $F=E \cup I$ where $I = (a,b)$ is an interval, $a > \sup E$ and $\Delta := d(E,I) > 0$. Let us call now $F_\Delta = E \cup I_\Delta$ where $I_\Delta$ is an interval with $\inf I_\Delta = a- \Delta$ and $|I| = |I_\Delta|$ (we simply concentrate $F$ by sticking its two parts to the closest one from $0$). We want to show that
\begin{equation}\label{eq:eq32}
| \TT \1_{F_\Delta} |_q^q \geq | \TT \1_F |_q^q .
\end{equation} 
This is essentially an inverse concentration result. Indeed, it expresses the idea that the nearer from $0$ a function is, the bigger its $1$-plane transform is. 

To prove \eqref{eq:eq32}, developing both sides, it would be sufficient to prove that for all $1 \leq m \leq q-1$ we have 
\begin{equation*}
\lr{( \TT \1_E)^{q-m} , (\TT \1_{I_\Delta})^m - (\TT \1_{I})^m} \geq 0  .
\end{equation*}
Since $\sup \supp(\TT \1_E) \leq \sup E$, we just have to prove that for all $r \in E$,
\begin{equation*}
\TT \1_{I_\Delta} (r) - \TT \1_{I} (r) \geq 0.
\end{equation*}
But if $r\leq a$ then with the change of variable $u^2 = v$,
\begin{equation}\label{eq:special}
\TT \1_{I_\Delta} (r) - \TT \1_{I} (r) = \int_{(a-\Delta)^2}^{a^2} \dfrac{dv}{2 \sqrt{v-r^2}} - \int_{(b-\Delta')^2}^{b^2} \dfrac{dv}{2 \sqrt{v-r^2}}.
\end{equation}
Here $\Delta'$ is such that $I_\Delta = (a-\Delta, b-\Delta')$. Because of $|I_\Delta| = |I|$ this implies $\Delta' \leq \Delta$. The integrated function in \eqref{eq:special} is nonincreasing (in $u$) and because of $\Delta' \leq \Delta$ we have $\TT \1_{I_\Delta} (r) \geq \TT \1_{I} (r)$ for $r \in \supp (\TT \1_E)$.\\

Now let us consider a countable union of disjoint intervals,
\begin{equation*}
F = \displaystyle{ \bigcup_{m \geq 0} I_m }.
\end{equation*}
By the process described above, we can simply stick an arbitrary large, finite number of the intervals $I_m$ to the closest one from $0$. This increases the $L^q$-norm of the $1$-plane transform. To pass from a finite number to an infinite number of $I_m$ we simply notice that $|I| < \infty$. Thus the lemma is true again.\\

Finally, assume that $F$ is just a measurable set. Then by definition,
\begin{equation*}|F| = \inf \left\{ \sum_{m=1}^\infty |I_m|, I_m \text{ open interval with } F \subset \bigcup_{m\geq 0} I_m \right\}.
\end{equation*}
Approaching $F$ with a recovering of intervals such that $|\bigcup_{m \geq 0} I_m - F| \ll 1 $ shows that the lemma remains true for any measurable set.

\end{proof}

Now using Lemma \ref{lem:concentr} we get
\begin{equation}\label{eq:345}
| \TT h_n |_q \geq | \TT g_n |_q - 2 \delta_n R^{-d/p}.
\end{equation}
Let us recall that $B$ denotes the best constant in \eqref{eq:map2}. Equation \eqref{eq:345}  leads to 
\begin{equation*}
B\geq \dfrac{| \TT h_n |_q}{ | h_n |_p} \geq \dfrac{| \TT g_n |_q - 2 \delta_n R^{-d/p}}{(1-\delta_n)^\frac{1}{p}}.
\end{equation*}
Let us call  $\liminf_{n \rightarrow \infty} \delta_n(R) = l(R)$. Then $l(R) \leq 1$ and making $n \rightarrow \infty$,
\begin{equation*}
B \geq \dfrac{B- 2 R^{-d/p}}{(1-l(R))^{1/p}}.
\end{equation*}
This forces $\limsup_{R \rightarrow \infty} l(R) =0$. In particular, there exist $R_0$ and a subsequence of $\EE_n$, still called $\EE_n$, such that
\begin{equation*}
|\EE_n - [0,R_0]| \leq \frac{e}{2} .
\end{equation*}
Thus denoting $E_n = \EE_n \cap [0,R_0]$, Lemma \ref{lem:fund} is proved. The proof in the case $k=1$ is similar, using Lemma \ref{lem:concentr1}.
\end{proof}

A simple consequence is the following:

\begin{cor}
Let $f_n$ be an extremizing sequence. Then modulo the group of dilations, $f_n$ admits a subsequence that converges weakly to a non-zero function in $L^p$.
\end{cor}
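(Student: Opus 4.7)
The plan is to deduce the corollary directly from Lemma \ref{lem:fund}, combining the rescaling it produces with the reflexivity of $L^p$ and a pairing against an indicator to rule out a trivial weak limit. I would first reduce to the case $f_n \geq 0$: replacing $f_n$ by $|f_n|$ preserves the $L^p$-norm, and the positivity of the kernel of $\TT$ yields $\|\TT |f_n|\|_q \geq \|\TT f_n\|_q$, so $|f_n|$ remains an extremizing sequence.

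Next I would invoke Lemma \ref{lem:fund} to obtain a scale $\lambda_n > 0$, a fixed radius $R_0$, and sets $E_n \subset [0, R_0]$ with $|E_n| \sim 1$ such that $g_n(r) := \lambda_n^{d/p} f_n(\lambda_n r)$ satisfies $g_n \geq \1_{E_n}$. The change of variable $u = \lambda_n r$ shows $\|g_n\|_p = \|f_n\|_p = 1$, so $(g_n)$ is bounded in $L^p((0,\infty), r^{d-1} dr)$. Since $p = \tfrac{d+1}{k+1} \in (1, \infty)$, this space is reflexive and Banach--Alaoglu extracts a subsequence, still denoted $g_n$, with $g_n \rightharpoonup g$ weakly in $L^p$. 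To verify $g \neq 0$ I would test this weak convergence against $\1_{[0,R_0]}$, which lies in $L^{p'}((0,\infty), r^{d-1} dr)$ because $[0,R_0]$ has finite $r^{d-1}dr$-measure. Combining $g_n \geq \1_{E_n}$ with $E_n \subset [0, R_0]$ gives
\begin{equation*}
\int_0^{R_0} g_n(r)\, r^{d-1}\,dr \;\geq\; \int_{E_n} r^{d-1}\,dr \;=\; |E_n| \;\gtrsim\; 1,
\end{equation*}
reading $|\cdot|$ as the contextual weighted measure $r^{d-1} dr$. Passing to the limit yields $\int_0^{R_0} g(r)\, r^{d-1}\,dr \gtrsim 1$, hence $g \neq 0$ in $L^p$.

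I do not foresee any genuine obstacle: once Lemma \ref{lem:fund} is available, the corollary is essentially a packaging result. The one point that requires a sentence of care is the interpretation of $|E_n|$; even under the pessimistic reading that $|E_n|$ refers to Lebesgue measure, an elementary rearrangement observation (the integral $\int_E r^{d-1} dr$ over a Lebesgue-measurable set $E \subset [0, R_0]$ of fixed Lebesgue measure is minimized when $E$ abuts $0$) still yields $\int_{E_n} r^{d-1} dr \gtrsim 1$, so the conclusion is unaffected.
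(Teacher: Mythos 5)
Your proposal follows the same route as the paper: apply Lemma \ref{lem:fund} to obtain the rescaled sequence $g_n$ bounded below by $\1_{E_n}$ on a fixed ball, extract a weak limit $g$ by reflexivity of $L^p$, and test the weak convergence against $\1_{[0,R_0]}$ to conclude $g \neq 0$. The preliminary reduction to $f_n \geq 0$ and the remark about the interpretation of $|E_n|$ are harmless clarifications that do not change the substance of the argument.
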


\begin{proof}
Let us consider $g_n$, $R_0$, $E_n$ given by Lemma \ref{lem:fund}. $g_n$ being bounded in $L^p$, it admits a subsequence that converges weakly to a function $g$. And $g \neq 0$:
\begin{equation*}
\disp{ \int_0^{R_0} g = \lim_{n \rightarrow \infty} \int_0^{R_0} g_n \geq \lim_{n \rightarrow \infty} \int_0^{R_0} \1_{E_n} \sim 1}.
\end{equation*}
\end{proof}

Another consequence is the following:

\begin{cor}
The sequence $g_n$ introduced in Lemma \ref{lem:fund} admits a subsequence that is uniformly $L^p$-integrable, that is
\begin{equation*}
\disp{\lim_{R \rightarrow \infty} \int_{\{ g_n > R \}} |g_n|^p  =0},
\end{equation*}
uniformly in $n$.
\end{cor}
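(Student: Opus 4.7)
The plan is to reduce to the known case of nonincreasing radial extremizing sequences by symmetrization, then appeal to Theorem \ref{thm:extrem}(i).

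First, let $\rho_n = g_n^*$ be the nonincreasing radial rearrangement of $g_n$ with respect to $r^{d-1}dr$. The rearrangement inequality for $\TT$ (which underlies the proof of Theorem \ref{thm:extrem}(i) in \cite{drouot}) gives $\|\TT\rho_n\|_q \geq \|\TT g_n\|_q$, so $\rho_n$ is itself a nonincreasing radial extremizing sequence of unit $L^p$-norm. Theorem \ref{thm:extrem}(i) then supplies, after passing to a subsequence, dilations $\mu_n > 0$ such that $\tilde{\rho}_n(r) := \mu_n^{d/p}\rho_n(\mu_n r)$ converges strongly in $L^p(r^{d-1}dr)$ to some nonincreasing, nonzero $\rho$.

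The main technical step is to show that $\mu_n \sim 1$. Lemma \ref{lem:fund} together with the definition of the rearrangement gives $\rho_n \geq \1_{[0,R_n^*]}$ with $R_n^*\sim 1$, hence $\tilde{\rho}_n(r) \geq \mu_n^{d/p}\1_{[0,R_n^*/\mu_n]}(r)$. If $\mu_n \to \infty$, this lower bound would force order-one $L^p$-mass to sit on intervals shrinking to $\{0\}$, contradicting strong convergence of $\tilde{\rho}_n$ to $\rho \in L^p$. If $\mu_n \to 0$, the same lower bound carries order-one $L^p$-mass out to infinity — precisely, on a slice $[T_n, R_n^*/\mu_n]$ with $T_n \to \infty$ chosen so that $T_n\mu_n \to 0$ — again incompatible with strong $L^p$-convergence. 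So $\mu_n$ is bounded above and below, and along a further subsequence $\mu_n \to \mu \in (0,\infty)$. Continuity of the dilation action on $L^p$ then upgrades strong convergence of $\tilde{\rho}_n$ to strong convergence of $\rho_n$ itself.

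Strong $L^p$-convergence of $\rho_n$ is equivalent, via Vitali's theorem, to uniform $L^p$-integrability: $\sup_n \int_{\{\rho_n > R\}} |\rho_n|^p r^{d-1}dr \to 0$ as $R \to \infty$. Since $g_n$ and $\rho_n = g_n^*$ share the same distribution function with respect to $r^{d-1}dr$, one has the exact equality $\int_{\{g_n > R\}}|g_n|^p r^{d-1}dr = \int_{\{\rho_n > R\}}|\rho_n|^p r^{d-1}dr$ for every $R$, and the conclusion transfers verbatim to $g_n$. The only non-routine part is pinning down $\mu_n \sim 1$, where the quantitative lower bound of Lemma \ref{lem:fund} enters crucially; the remainder is the symmetrization machinery combined with standard functional analysis.
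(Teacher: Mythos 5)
Your proof is correct and takes essentially the same route as the paper: symmetrize to $\rho_n = g_n^*$, invoke Theorem~\ref{thm:extrem}(i) to get strong $L^p$-convergence of $\rho_n$ after a further dilation, conclude uniform $L^p$-integrability, and transfer back to $g_n$ via the equimeasurability identity $\int_{\{g_n > R\}}|g_n|^p = \int_{\{g_n^* > R\}}|g_n^*|^p$. The only difference is that you make explicit the step of pinning down the residual dilations $\mu_n \sim 1$ (using the lower bound $\rho_n \geq \1_{[0,R_n^*]}$ with $R_n^*\sim 1$), whereas the paper handles this implicitly by appealing to the fact that $g_n^*$ already converges weakly to a nonzero function.
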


\begin{proof} We recall that $g_n$ was constructed so that the sequence $g_n^\ast$ converges (modulo subsequence) to in $L^p$ -- see the beginning of the proof of Lemma \ref{lem:fund}. Thus $g_n^\ast$ admits a subsequence that is uniformly $L^p$-integrable, and so does $g_n$, as
\begin{equation*}
\disp{ \int_{\{ g_n>R \}} |g_n|^p = \int_{\{ g_n^\ast>R \}} |g_n^\ast|^p }.
\end{equation*}
\end{proof}

This approach can be generalized even in the non-radial case, as long as our sequence of extremizing functions does not converge weakly to $0$. This is a good starting point if one wants to generalize Christ's theorem in \cite{Christ3} for the Radon transform ($k=d-1$). What is missing, however, is a complete quasiextremal theory for the inequality \eqref{eq:mapR} such as Theorem $1.2$ in \cite{Christ2} for $k=d-1$.\\

From now we will consider $g_n$ instead of $f_n$, and we will just assume the three following property, that are actually consequences of the above lemma and corollaries:
\begin{itemize}
\item The sequence $g_n$ converges weakly to a non-zero function $g \in L^p$.
\item There exist some sets $E_n$ of measure $|E_n| \sim 1$ such that $g_n \geq 1_{E_n}$ and $E_n \subset [0,R_0]$.
\item The sequence $g_n$ is uniformly $L^p$-integrable.
\end{itemize}

\section{Weak interaction}\label{sec:3}
Let us recall the following famous lemma from \cite{lions}:

\begin{lem}\label{lem:pl}
Let $\phi_n$ a sequence of nonnegative functions in $L^1(\R^d)$, such that $| \phi_n |_1=\lambda$. Then there exists a subsequence of $\phi_n$, still noted $\phi_n$, such that one of the following is satisfied:
\begin{enumerate}
\item (tightness) There exists $y_n \in \R^d$, such that for all $\epsi >0$, there exists $R >0$, for all $n$,
\begin{equation*}
\disp{ \int_{B(y_n,R)} \phi_n \geq \lambda-\epsi }.
\end{equation*}
\item (vanishing) For all $R$, 
\begin{equation*}
\disp{ \lim_{n \rightarrow \infty} \sup_{y \in \R^d} \int_{B(y,R)} \phi_n =0 }.
\end{equation*}
\item (dichotomy) There exist $0<\alpha<\lambda$ and two sequences $\phi_n^1, \phi_n^2$ of $L^1$-functions with compact support such that $\phi_n \geq \phi_n^1, \phi_n^2 \geq 0$ and
\begin{equation*} 
| \phi_n - \phi_n^1-\phi_n^2 |_1 \rightarrow 0,  \ \ | \phi_n^1 |_1 \rightarrow \alpha , \ \ | \phi_n^2 |_1 \rightarrow \lambda-\alpha,
\end{equation*}
\begin{equation*}
d( \supp(\phi_n^1) , \supp(\phi_n^2)) \rightarrow \infty .
\end{equation*}
\end{enumerate}
\end{lem}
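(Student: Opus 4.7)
The plan is to follow Lions's original approach via the L\'evy concentration function
$$Q_n(R)\ =\ \sup_{y\in\R^d}\int_{B(y,R)}\phi_n.$$
Each $Q_n$ is nondecreasing on $(0,\infty)$ with values in $[0,\lambda]$. A diagonal extraction along a countable dense set of radii, combined with monotonicity, will yield a subsequence (still called $\phi_n$) along which $Q_n(R)\to Q(R)$ pointwise for every $R>0$, where the limit $Q$ is nondecreasing and bounded by $\lambda$. Set $\alpha=\lim_{R\to\infty} Q(R)\in[0,\lambda]$. The three alternatives in the statement will correspond to $\alpha=0$, $\alpha=\lambda$, and $0<\alpha<\lambda$, respectively.

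The case $\alpha=0$ gives vanishing at once, since $Q_n(R)\to 0$ for every fixed $R$. The case $\alpha=\lambda$ gives tightness: for each $\epsi>0$ pick $R_\epsi$ with $Q(R_\epsi)>\lambda-\epsi/2$, so for $n$ large there exists a center $y_n^\epsi$ with $\int_{B(y_n^\epsi,R_\epsi)}\phi_n>\lambda-\epsi$; any two such centers for parameters $\epsi,\epsi'$ must lie within distance $R_\epsi+R_{\epsi'}$ of each other, for otherwise their balls would be disjoint and would carry combined mass exceeding $\lambda$. A diagonal argument over a sequence $\epsi_k\downarrow 0$ then produces a single sequence $y_n$ that serves every $\epsi>0$, after slightly enlarging the radii.

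The substantive case is dichotomy, $0<\alpha<\lambda$. Fix $\epsi_n\downarrow 0$ and radii $R_n^1\uparrow\infty$ with $Q(R_n^1)>\alpha-\epsi_n$, and for $n$ large choose $y_n$ achieving $\int_{B(y_n,R_n^1)}\phi_n>\alpha-\epsi_n$. Since $Q_n(R)\to Q(R)\leq\alpha$ for each fixed $R$, a further diagonal extraction produces $R_n^2\uparrow\infty$ with $R_n^2\gg R_n^1$ and
$$\int_{B(y_n,R_n^2)\setminus B(y_n,R_n^1)}\phi_n\ \longrightarrow\ 0.$$
I then define
$$\phi_n^1\ =\ \phi_n\,\1_{B(y_n,R_n^1)},\qquad \phi_n^2\ =\ \phi_n\,\1_{\R^d\setminus B(y_n,R_n^2)}.$$
The distance $d(\supp(\phi_n^1),\supp(\phi_n^2))$ is at least $R_n^2-R_n^1\to\infty$, $\|\phi_n^1\|_1\to\alpha$, and $\|\phi_n-\phi_n^1-\phi_n^2\|_1\to 0$; mass conservation then forces $\|\phi_n^2\|_1\to\lambda-\alpha$, which is precisely the dichotomy conclusion.

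The only delicate point throughout is the interleaving of the two limits in $R$ and in $n$, which forces several nested diagonal extractions (first to produce the limiting concentration function $Q$, then to secure a common center $y_n$ in the tight case, and finally to select the growing radii $R_n^1,R_n^2$ in the dichotomy case). This is essentially bookkeeping with monotone sequences rather than a genuine analytic obstacle.
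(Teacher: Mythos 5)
The paper does not prove Lemma~\ref{lem:pl} at all; it is simply quoted from Lions \cite{lions}, so there is no internal proof to compare against. What you have written is the standard L\'evy-concentration-function argument from Lions's original paper, and it is essentially correct. Two points deserve tightening. First, in the dichotomy case the order of choices matters: you fix $\epsi_n$ and $R_n^1$ using only the limit function $Q$, and then try to find $R_n^2\gg R_n^1$ with $Q_n(R_n^2)\leq\alpha+o(1)$; but if $R_n^1$ was allowed to grow too quickly relative to the (non-uniform) rate at which $Q_n\to Q$, such an $R_n^2$ need not exist, and moreover the upper bound $\|\phi_n^1\|_1\leq\alpha+o(1)$, which you assert, is not guaranteed (one can have $Q_n(R_n^1)$ close to $\lambda$ along the diagonal). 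The clean fix is to choose an increasing sequence of radii $\rho_k\uparrow\infty$ with $\rho_{k+1}-\rho_k\to\infty$ and $Q(\rho_k)>\alpha-1/k$, take $N_k$ so large that $Q_n(\rho_k)>\alpha-1/k$ and $Q_n(\rho_{k+1})<\alpha+1/k$ for $n\geq N_k$, and set $R_n^1=\rho_k$, $R_n^2=\rho_{k+1}$, $\epsi_n=1/k$ for $N_k\leq n<N_{k+1}$; then $\alpha-\epsi_n<\int_{B(y_n,R_n^1)}\phi_n\leq Q_n(R_n^2)<\alpha+\epsi_n$ gives both the sandwich for $\|\phi_n^1\|_1$ and the vanishing of the intermediate annulus. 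Second, the statement asks for $\phi_n^2$ with compact support, whereas your $\phi_n^2=\phi_n\1_{\R^d\setminus B(y_n,R_n^2)}$ is supported on the complement of a ball; truncate further to $\phi_n\1_{B(y_n,S_n)\setminus B(y_n,R_n^2)}$ with $S_n$ chosen so the discarded mass is $o(1)$, which is possible since each $\phi_n\in L^1$. With those repairs the argument is complete and matches Lions's proof.
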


Let us consider the sequence $\phi_n = |g_n|^p$. $\phi_n \in L^1(\R, r^{d-1}dr)$ and $|\phi_n |_1 = 1$, thus $\phi_n$ satisfies one of the three consequences above. We can see, as a consequence of the previous section, that $\phi_n$ does not satisfy $(2)$. Our purpose here is to prove that $(3)$ cannot occur either. \\

We first state a refinement of the dichotomy condition:

\begin{lem}\label{lem:supp}
Let $\phi_n = |g_n|^p$. Assume that the dichotomy condition $(3)$ hold. Then up to a subsequence, there exist $0<\alpha<1$, $R'_0>0$, and two sequences, $\varphi_n^1, \varphi_n^2$, of $L^1$-functions with compact support such that $\phi_n \geq \varphi_n^1, \varphi_n^2 \geq 0$ and
\begin{equation*} 
| \phi_n - \varphi_n^1-\varphi_n^2 |_1 \rightarrow 0,  \ \ | \varphi_n^1 |_1 \rightarrow \alpha , \ \ | \varphi_n^2 |_1 \rightarrow 1-\alpha,
\end{equation*}
satisfying the additional support condition:
\begin{equation}\label{eq:support}
\supp(\varphi_n^1) \subset [0,R'_0], \ \ \supp(\varphi_n^1) \subset [R_n, \infty)
\end{equation}
with $R_n \rightarrow \infty$.
\end{lem}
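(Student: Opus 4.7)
The plan is to combine Lions' concentration-compactness with the anchoring provided by Lemma \ref{lem:fund}. First I would apply Lemma \ref{lem:pl} to $\phi_n = |g_n|^p$: since dichotomy is assumed, this yields compactly supported $\phi_n^1, \phi_n^2 \leq \phi_n$ with $\|\phi_n - \phi_n^1 - \phi_n^2\|_1 \to 0$, $\|\phi_n^1\|_1 \to \alpha$, $\|\phi_n^2\|_1 \to 1-\alpha$, and $d(\supp \phi_n^1, \supp \phi_n^2) \to \infty$.

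Second, the anchor $g_n \geq \1_{E_n}$ with $E_n \subset [0,R_0]$ and $|E_n| \sim 1$ forces $\int_{[0,R_0]} \phi_n \gtrsim 1$. Since $[0,R_0]$ is bounded and the two Lions supports separate to infinity, for $n$ large at most one of them can meet $[0,R_0]$; after relabeling this is $\phi_n^1$, so $\supp \phi_n^2 \subset [R_n, \infty)$ with $R_n := \inf \supp \phi_n^2 \to \infty$. This already supplies the right-side support condition. Moreover $\int_{[0,R_0]} \phi_n^1 \gtrsim 1$, since $\phi_n^2$ no longer contributes there and the Lions residual $r_n := \phi_n - \phi_n^1 - \phi_n^2$ has $\|r_n\|_1 \to 0$.

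The main obstacle is ensuring $\supp \varphi_n^1 \subset [0, R_0']$ for a single fixed $R_0'$: Lions only gives compact support at each $n$, not uniformly. I would handle this by iterating Lemma \ref{lem:pl} on the normalized sequence $\phi_n^1 / \|\phi_n^1\|_1$. The anchor mass excludes vanishing, and in the tight case forces the concentration center to remain bounded: otherwise the tight ball would miss $[0,R_0]$ for $n$ large and the anchor mass would be lost, contradicting tightness. In the dichotomous case, iterate on the sub-piece still intersecting $[0,R_0]$. Each iteration peels off a chunk of mass $\geq c > 0$ anchored near the origin, and since these chunks are disjoint with total mass at most $1$, the procedure terminates after finitely many steps. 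Combined with the uniform $L^p$-integrability of $g_n$ from Section \ref{sec2} (inherited by $\phi_n^1 \leq \phi_n$), the resulting tight and uniformly integrable sequence satisfies Dunford-Pettis, hence admits a subsequence with $\phi_n^1 \rightharpoonup \phi^1$ weakly in $L^1$; the uniform tightness transfers to $\phi^1$, which therefore has vanishing tails.

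Finally, I would choose $R_0'$ so that $\int_{R_0'}^\infty \phi^1$ is as small as desired and set $\varphi_n^1 := \phi_n \1_{[0, R_0']}$, $\varphi_n^2 := \phi_n \1_{[R_n, \infty)}$. A further subsequence extraction makes $\|\varphi_n^1\|_1$ converge to some $\alpha \in (0, 1)$. The residual decomposes as $\|\phi_n - \varphi_n^1 - \varphi_n^2\|_1 = \int_{R_0'}^{R_n} (\phi_n^1 + \phi_n^2 + r_n)$: the $\phi_n^2$-contribution vanishes since $\supp \phi_n^2 \subset [R_n,\infty)$, the $r_n$-contribution is bounded by $\|r_n\|_1 \to 0$, and the $\phi_n^1$-contribution tends to $\int_{R_0'}^\infty \phi^1$ by testing the weak convergence against $\1_{[R_0', \infty)} \in L^\infty$. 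A diagonal extraction as $R_0'$ is taken increasingly large then produces a single subsequence along which the residual vanishes, completing the proof.
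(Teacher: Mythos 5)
Your overall architecture — apply Lions, anchor one piece near the origin via $E_n\subset[0,R_0]$, and then upgrade "compactly supported at each $n$" to "supported in a fixed $[0,R_0']$" — is the right shape, and steps two and four are sound. But the crucial third step, the iterated application of Lions' lemma, has a genuine gap. You write that ``each iteration peels off a chunk of mass $\geq c>0$'' so that the process ``terminates after finitely many steps,'' but Lions' dichotomy alternative gives no lower bound on the mass $1-\alpha$ of the piece that drifts away; that parameter can be arbitrarily close to $0$ at each stage. What the anchor gives is a lower bound on the mass \emph{retained} near $[0,R_0]$, not on the mass shed, so the product $\alpha_1\alpha_2\cdots$ stays bounded below without forcing the iteration to stop. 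One can show $\sum_j(1-\alpha_j)<\infty$, but that still allows infinitely many dichotomy steps, and passing to the limit of that process (with a diagonal extraction across iterations, and an argument that the limiting piece is actually tight) is exactly the hard point you would need to fill in. Until tightness of a modified $\phi_n^1$ is actually established, Dunford--Pettis is not available (equi-integrability alone is not enough on the infinite-measure half-line), and the later manipulations — extracting $\phi^1$, choosing $R_0'$, arguing $\int_{R_0'}^{R_n}\phi_n^1\to\int_{R_0'}^{\infty}\phi^1$ — all lean on the missing tightness, since they require $\int_{R_n}^{\infty}\phi_n^1\to 0$.

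The paper avoids this entirely by working directly with the structure of $\supp(\phi_n^1)$: cover it by an open set, write that set as an ordered disjoint union of intervals $U_n^i$, track the distances $d_n^i=d([0,R_0],U_n^i)$, and use a single Cantor diagonal to split the intervals into those whose limiting distance is finite (kept in $\varphi_n^1$, which then lives in a fixed $[0,\rho_0]$) and those whose limiting distance is $+\infty$ (shunted into $\varphi_n^2$). This is a one-pass construction with no iteration to terminate, and it produces the fixed $R_0'$ concretely as $\rho_0=\sup_n\sup U_n'$. If you want to keep your Dunford--Pettis framing, you would still need an argument of roughly this interval-sorting type to supply tightness; as written the termination claim is simply false. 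A minor additional point: your $\varphi_n^2:=\phi_n\1_{[R_n,\infty)}$ is not compactly supported, which the lemma statement formally requires, though this is easily repaired by truncating at a large $n$-dependent radius.
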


This is a consequence of the fact that $g_n \geq \1_{E_n}$ with $|E_n|\sim 1$ and $E_n \subset [0,R_0]$, using measure theory and structure of open sets in $\R$.

\begin{proof}
Let us recall that there exist some sets $E_n \subset [0,R_0]$ with measure of  order $1$ such that $g_n \geq \1_{E_n}$. Let us chose $\epsi \leq |E_n|/2$, and $\phi_n^1$, $\phi_n^2$ associated by $(iii)$ to this choice of $\epsi$. Then $\phi_n^1$ or $\phi_n^2$ must have some weight inside $[0,R_0]$, let's say $\phi_n^1$. The support separation property -- $d(\supp(\phi_n^1), \supp(\phi_n^2)) \rightarrow \infty$ -- insures that $\supp(\phi_n^2) \cap [0,R_0] = \emptyset$ for $n$ large enough.\\

Let us call $K_n := \supp(\phi_n^1)$. There exist some open sets $U_n$, containing $K_n$, with $|U_n - K_n| \ll 1$. The structure of open sets in $[0,\infty)$ allows us to write a decomposition
\begin{equation*}
U_n = \bigcup_{i \geq 0} U_n^i
\end{equation*}
with $U_n^i$ open, disjoint intervals, ordered by $\sup U_n^i \leq \inf U_n^{i+1}$, possibly enlarging $U_n$ by an arbitrary small open set containing $0$. Let us now call
\begin{equation*}
d_n^i = d([0,R_0], U_n^i).
\end{equation*}
For all $n$, the sequence $(d_n^i)_{i \geq 0}$ is increasing, and $d_n^0 = 0$. Using Cantor's diagonal argument, we can assume that for all $i$, $d_n^i \rightarrow d^i \in [0,\infty]$ as $n \rightarrow \infty$. Let us call $i_0$ the smallest integer such that $d^{i_0} = \infty$. Note that since $d_n^0 =0$, because of the existence of $E_n$, we have $i_0 \geq 1$. If $i_0 = \infty$, then the lemma is proved, we do not have to change $\phi_n^1, \phi_n^2$. If $i_0 < \infty$, let us call
\begin{equation*}
\disp{U'_n := \bigcup_{i=0}^{i_0-1} U_n^i}, \ \ \rho_0 := \sup \{\sup(U'_n), \ n \geq 0 \} < \infty .
\end{equation*}
We can change the sequences $\phi_n^1$, $\phi_n^2$ to
\begin{equation*}
\varphi_n^1 = \phi_n^1 \1_{U'_n}, \ \ \varphi_n^2 = \phi_n^1 \1_{U_n - U'_n} + \phi_n^2 .
\end{equation*}
It remains to prove that $\varphi_n^1, \varphi_n^2$ satisfy the conclusions of the lemma. Up to a subsequence, we can assume that $| \varphi_n^1 |_1$ converges to some $1 \geq\az'\geq 0$. Moreover, we have $0 \leq \varphi_n^1 \leq \phi_n^1$ and thus $\az' < 1$. Since $\varphi_n^1 + \varphi_n^2 = \phi_n^1+\phi_n^2$, $| \phi_n - \varphi_n^1 - \varphi_n^2 |_1 \rightarrow 0$ and the additional support condition \eqref{eq:support} is satisfied -- with $R_n=d_n^{i_0}$. At last, $\az'>0$: indeed,
\begin{equation*}
| \phi_n - \varphi_n^1 - \varphi_n^2 |_1 \rightarrow 0 \Rightarrow \disp{ \int_0^{\rho_0} | \phi_n - \varphi_n^1 - \varphi_n^2| = \int_0^{\rho_0} | \phi_n - \varphi_n^1| \rightarrow 0 } .
\end{equation*}
But $\phi_n \geq \1_{E_n}$ with $|E_n| \sim 1$ which implies $| \varphi_n^1 |_1 \sim 1$, thus $\az' >0$.
\end{proof}
We will not change the notations for purpose of simplicity: we will assume without loss of generality $\az' = \az$ and $\rho_0 = R_0$.\\

We define $g_n^1 = |\varphi_n^1|^{1/p}$ and $g_n^2 = |\varphi_n^2|^{1/p}$. We are ready to prove our claim: dichotomy cannot occur. Let us be guided by the interpretation of Lemma \ref{lem:concentr} (or Lemma \ref{lem:concentr1} in the case $k=1$): radial characteristic functions of sets that are far away from $0$ cannot have a large $k$-plane transform. Here, $\gtd$ \textit{is} far away from $0$. However, we have no assumption on the support of the size of the support of $\gtd$ and this is a real difficulty: Lemma \ref{lem:concentr}, \ref{lem:concentr1} cannot be directly applied and must be generalized to \textit{any} function. Let us give an idea of the difficulty: roughly, we can write
\begin{equation*}
\disp{\gtd = \sum_{m \in \Z} 2^m \1_{E_m}}
\end{equation*}
with $E_m = \{ r \geq 0 \text{ with } \gtd(r) \sim 2^m \}$. Each $E_m$ must be far from $0$. As a consequence, 
\begin{equation*}
| \TT \gtd |_q^q = \disp{ \sum_{m \in \Z} 2^{mq} | \TT \1_{E_m} |_q^q  + \sum_{m_1, ..., m_q} 2^{(m_1+...+m_q)q} \int \TT \1_{E_{m_1}} \cdot ... \cdot \TT \1_{E_{m_q}}  } .
\end{equation*}
The first sum is called principal sum (and its components principal terms) while the second is called interaction sum (and its components interaction terms); this is a notion we will encounter again below. It is not be too hard to prove that the principal sum tends to $0$ as $n$ tends to infinity: it follows from Lemma \ref{lem:concentr}. However, there are too many terms in the interaction sum. Lemma \ref{lem:concentr} would not be of any help. Approaches of this type have however proved to work modulo extra work, see \cite{Christ2}, Theorem 1.6. We chose to take here another path.\\

Let us get inspired by \cite{lions}. We introduce the quantities
\begin{equation*}
S_\alpha := \sup \{ | \TT f |_q^q, | f |_p^p = \alpha \}.
\end{equation*}
Then the following convexity inequality is satisfied: for all $0 < \alpha < 1$,
\begin{equation}\label{eq:strict}
S_1 > S_\alpha + S_{1 - \alpha}.
\end{equation}
This comes from the fact that $S_\az = \az^\frac{q}{p} S_1$ and the convexity inequality
\begin{equation*}
1 > \az^\frac{q}{p} + (1-\az)^\frac{q}{p},
\end{equation*}
for $q>p$.\\

Assume now that dichotomy can occur. The two sequence $\gtu $, $\gtd $, satisfy then
\begin{enumerate}
\item[$(i)$] $\supp(\gtu) \subset [0,R_0]$, $d(\supp(\gtu), \supp(\gtd)) \rightarrow \infty$.
\item[$(ii)$] $| \gtu |_p^p \rightarrow \az \in (0,1)$, $| \gtd |_p^p \rightarrow 1-\az$ as $n \rightarrow \infty$.
\item[$(iii)$] $| |g_n|^p - |\gtu+\gtd|^p |_1 \rightarrow 0$ as $n \rightarrow \infty$.
\item[$(iv)$] $g_n \geq \gtu, \gtd \geq 0$.
\end{enumerate}
Our purpose is to prove a contradiction with \eqref{eq:strict}. The essential idea is:

\begin{lem}
Let $g_n$, $\gtu$, $\gtd$ as above. Then
\begin{equation}\label{eq:strict1}
| \TT g_n |_q^q - | \TT \gtu |_q^q - | \TT \gtd |_q^q \rightarrow 0 .
\end{equation}
\end{lem}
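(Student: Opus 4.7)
My approach is to write $g_n = \gtu + \gtd + h_n$ where $h_n := g_n - \gtu - \gtd$, and show that $h_n \to 0$ in $L^p$, so that $\TT h_n \to 0$ in $L^q$. Since $\supp(\gtu) \cap \supp(\gtd) = \emptyset$ for large $n$ by $(i)$, and $g_n \geq \gtu, \gtd \geq 0$ by $(iv)$, we have $g_n \geq \gtu + \gtd$ pointwise, so $h_n \geq 0$. The elementary inequality $(a-b)^p \leq a^p - b^p$ valid for $a \geq b \geq 0$ and $p \geq 1$, combined with $(iii)$ and the disjoint support (which gives $(\gtu+\gtd)^p = \gtu^p + \gtd^p$), yields
\begin{equation*}
\int h_n^p \, r^{d-1} dr \leq \int g_n^p \, r^{d-1} dr - \int (\gtu+\gtd)^p \, r^{d-1} dr \to 1 - \az - (1-\az) = 0.
\end{equation*}
By boundedness of $\TT$, $\TT h_n \to 0$ in $L^q$, and since $\| \TT g_n \|_q$, $\| \TT(\gtu+\gtd) \|_q$ remain uniformly bounded, the mean value inequality for $t \mapsto t^q$ gives $\| \TT g_n \|_q^q - \| \TT(\gtu+\gtd) \|_q^q \to 0$. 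The claim thus reduces to
\begin{equation*}
\| \TT(\gtu+\gtd) \|_q^q - \| \TT \gtu \|_q^q - \| \TT \gtd \|_q^q \to 0.
\end{equation*}

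Since $q = d+1$ is an integer, expanding via the binomial theorem reduces matters to showing that each cross term
\begin{equation*}
I_j := \int (\TT \gtu)^{q-j} (\TT \gtd)^j \, r^{d-k-1} dr, \qquad 1 \leq j \leq q-1,
\end{equation*}
tends to $0$. Two facts drive this. First, $\TT \gtu$ is supported in $[0,R_0]$: the kernel $k(r,u) = \1_{u \geq r}(u^2-r^2)^{k/2-1} u$ vanishes for $u < r$, and $\gtu$ vanishes for $u > R_0$, so the integral defining $\TT \gtu(r)$ is empty whenever $r > R_0$. Second, $\TT \gtd$ decays uniformly on $[0,R_0]$: when $r \leq R_0$ and $u \geq R_n$ (eventually $R_n \geq 2R_0$), the bound $u^2 - r^2 \geq u^2/2$ gives $(u^2-r^2)^{k/2-1} u \lesssim u^{k-1}$, uniformly in both regimes $k=1$ and $k \geq 2$. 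H\"older's inequality with conjugate exponent $p' = (d+1)/(d-k)$ then yields
\begin{equation*}
\TT \gtd(r) \lesssim \int_{R_n}^\infty \gtd(u) \, u^{k-1} du \lesssim \| \gtd \|_{L^p(u^{d-1}du)} \cdot R_n^{-(d-k)/(d+1)},
\end{equation*}
since a direct computation yields $(k-1-(d-1)/p)\, p' = -2$, hence $\int_{R_n}^\infty u^{-2} du = R_n^{-1}$.

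Combining the two facts, we bound $I_j$ by $\bigl(\sup_{[0,R_0]} \TT \gtd \bigr)^j \int_0^{R_0} (\TT \gtu)^{q-j} r^{d-k-1} dr$, where the latter integral is controlled by H\"older with exponents $q/(q-j), q/j$:
\begin{equation*}
I_j \lesssim R_n^{-j(d-k)/(d+1)} \cdot \| \TT \gtu \|_q^{q-j} \cdot R_0^{j(d-k)/q} \to 0,
\end{equation*}
using $\| \TT \gtu \|_q \leq B \| \gtu \|_p \leq B$. The main subtlety is producing a single uniform pointwise bound on $\TT \gtd$ restricted to $[0,R_0]$ covering both $k=1$ (kernel singular at $u=r$) and $k \geq 2$ (polynomial decay kernel); fortunately the separation $u \geq R_n \gg R_0 \geq r$ keeps us far from the diagonal singularity, so both cases reduce to the same H\"older estimate.
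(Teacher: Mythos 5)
Your proof is correct, and it takes a genuinely simpler route than the paper's for the crucial cross-term estimate. Both proofs begin identically: write $h_n = g_n - \gtu - \gtd$, use disjoint supports and the superadditivity of $t\mapsto t^p$ to show $\|h_n\|_p\to 0$, then expand $\|\TT\gtu + \TT\gtd\|_q^q$ binomially and reduce to showing each mixed term $I_j = \int (\TT\gtu)^{q-j}(\TT\gtd)^j\, r^{d-k-1}dr$ vanishes. The divergence comes in how you kill $I_j$. The paper first truncates $\gtu$ at a level $R_\epsi$, invoking the uniform $L^p$-integrability of $g_n$ to control the error, and only then exploits the pointwise bound $\TT\1_{[0,R_0]}\lesssim R_0^k$ together with the integrated estimate of Lemma~\ref{lem:weak}. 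You sidestep the truncation entirely: since $\TT\gtu$ is supported in $[0,R_0]$, you may pull out a uniform pointwise bound on $\TT\gtd$ there (your H\"older computation, using that $u\ge R_n\ge 2R_0$ keeps you away from the kernel singularity, gives exactly $\TT\gtd(r)\lesssim \|\gtd\|_p R_n^{-1/p'}$, the same decay as Lemma~\ref{lem:weak}), and then control $\int_0^{R_0}(\TT\gtu)^{q-j}r^{d-k-1}dr$ by H\"older against the finite measure of $[0,R_0]$ plus the operator bound $\|\TT\gtu\|_q\le B\|\gtu\|_p\le B$. This last step is the key simplification: by working with the $L^q$ norm of $\TT\gtu$ rather than a pointwise $L^\infty$ bound on $\gtu$, you never need the uniform $L^p$-integrability of $g_n$ at all — that hypothesis, which the paper establishes in Section~\ref{sec2} and invokes here, becomes superfluous for this lemma. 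Your argument is therefore not only cleaner but also slightly more general, since it applies to any dichotomy decomposition satisfying (i), (ii), (iv) alone (plus the normalization $\|g_n\|_p=1$).
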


It means in a way that $\TT g_n^1$ and $\TT g_n^2$ interact weakly, or are asymptotically orthogonal. This will be a contradiction with \eqref{eq:strict}. It is interesting to relate this lemma to the discussion made above. Here we do not make $\TT\gtd$ interact with itself, but with  $\TT\gtu$. Since the supports of $\gtu$ and $\gtd$ are far away from each other it is actually much easier.

\begin{proof} Let us first define $\epsi_n := g_n - \gtu-\gtd$. Then $| \epsi_n |_p \rightarrow 0$. Indeed,
\begin{align*}
| \epsi_n |_p^p & = \disp{ \int_0^\infty |g_n -\gtu- \gtd|^p }\\
     & \leq \disp{ \int_0^\infty |g_n|^p - |\gtu + \gtd|^p } \rightarrow 0,
\end{align*}
because of $(iii)$ in Lemma \ref{lem:pl}. Moreover,
\begin{align*}
| \TT g_n |_q^q & \leq | \TT \gtu + \TT \gtd |_q^q + O(| \epsi_n |_p) \\
                  & \leq \disp{|  \TT \gtu |_q^q + | \TT \gtd |_q^q + \sum_{1 \leq m \leq q-1} \binom{q}{m} \lr{ (\TT \gtu)^{q-m}, (\TT \gtd)^m } + o(1)}.
\end{align*}
To prove \eqref{eq:strict1} it is sufficient to show that for $1 \leq m \leq q-1$, $\lr{ (\TT \gtu)^{q-m}, (\TT \gtd)^m } \rightarrow 0$. Let $\epsi >0$. Since $g_n$ is uniformly $L^p$-integrable, there exists $R_\epsi$ such that 
\begin{equation*}
\disp{ \int_{\gtu \geq R_\epsi} |\gtu|^p \leq \int_{g_n \geq R_\epsi} |g_n|^p \leq \epsi},
\end{equation*}
uniformly in $n$. Thus, in a way, it is sufficient to prove \eqref{eq:strict1} for $\gtu \leq R_\epsi$. Indeed,
\begin{align*}
| \lr{ (\TT \gtu)^{q-m}, & (\TT \gtd)^m } - \lr{ (\TT 1_{\{\gtu \leq R_\epsi\}}\gtu)^{q-m}, (\TT \gtd)^m } |\\ 
   &= | \lr{ (\TT (\1_{\{\gtu \leq R_\epsi\}}\gtu + \1_{\{\gtu > R_\epsi\}}\gtu  ))^{q-m} - (\TT 1_{\{\gtu \leq R_\epsi\}}\gtu)^{q-m}, (\TT \gtd)^m }| \\ 
   & \leq  \disp{ \sum_{1\leq i \leq m-q} \binom{m-q}{i} |  \lr{ (\TT (\1_{\{\gtu \leq R_\epsi\}}\gtu)^{q-m-i} (\TT \1_{\{\gtu > R_\epsi \}}\gtu  ))^i, (\TT \gtd)^m }  |}.
\end{align*}
Let us treat independently the quantities $\lr{ (\TT (\1_{\{\gtu \leq R_\epsi\}}\gtu)^{q-m-i} (\TT \1_{\{\gtu > R_\epsi\}}\gtu )^i, (\TT \gtd)^m }  $ that appeared just above. Using H\"older's inequality with
\begin{equation*}
1 = \dfrac{q-m-i}{q}+ \dfrac{i}{q} + \dfrac{m}{q},
\end{equation*}
we get
\begin{align*}
|\lr{ (\TT (\1_{\{\gtu \leq R_\epsi\}}& \gtu)^{q-m-i} (\TT \1_{\{\gtu > R_\epsi\}}\gtu  ))^i,  (\TT \gtd)^m }  | \\
&  \leq | (\TT \1_{\{\gtu \leq R_\epsi\}}\gtu |_q^{q-m-i} \cdot | \TT \1_{\{\gtu > R_\epsi\}}\gtu   |_q^i \cdot | \TT g_n^2 |_q^m \\
   & \leq A_\TT^q \cdot | g_n^1 |_p^{q-m-i} \cdot | \1_{\{\gtu > R_\epsi\}}\gtu |_p^i \cdot | g_n^2 |_p^m \\
   & \lesssim \epsi^\frac{i}{p}.
\end{align*}
Coming back to our initial point,
\begin{equation*}
| \lr{ (\TT \gtu)^{q-m}, (\TT \gtd)^m } - \lr{ (\TT 1_{\{\gtu \leq R_\epsi\}}\gtu)^{q-m}, (\TT \gtd)^m } | \lesssim \epsi^{1/p}
\end{equation*}
and this bound is uniform in $n$. Thus we can assume without loss of generality $g_n^1 \leq R_\epsi$. This leads to
\begin{align*}
\lr{ (\TT \gtu)^{q-m}, (\TT \gtd)^m } & \leq R_\epsi^{q-m} \lr{ (\TT \1_{[0,R_0]})^{q-m}, (\TT \gtd)^m } \\ 
     & \lesssim R_\epsi^{q-m} R_0^{(q-m)k} \lr{ \1_{[0,R_0]}, (\TT \gtd)^m  }.
\end{align*}

\begin{lem}\label{lem:weak}
Let $\psi \in L^p$ and $R \geq 1$ such that $\delta := d(\supp(\psi), [0,R]) \geq R$. Then for all $1 \leq m \leq q-1$,
\begin{equation*}
\lr{\1_{[0,R]}, (\TT \psi)^m } \lesssim \dfrac{R^{d-k}}{(R+\delta)^\frac{m}{p'}} | \psi |_p^m .
\end{equation*}
\end{lem}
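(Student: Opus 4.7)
The inner product here is with respect to $r^{d-k-1}\,dr$ (the weight that makes $(\TT\psi)^m$ a piece of $\|\TT\psi\|_q^q$). My plan is to extract a pointwise, $r$-independent bound for $\TT\psi(r)$ on $[0,R]$, and then integrate trivially. The separation hypothesis $\delta \geq R$ is what makes this approach work.

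The first step is a pointwise estimate exploiting the separation. Since $\psi$ is supported in $[R+\delta,\infty)$ and $R+\delta \geq 2R \geq 2r$ for every $r\in[0,R]$, we have $u^2 - r^2 \geq \tfrac{3}{4}u^2$ on $\supp(\psi)$. This lets me bound $(u^2-r^2)^{k/2-1}$ by a multiple of $u^{k-2}$ (the inequality goes the right way in both cases $k=1$ and $k\geq 2$, for opposite reasons: for $k\geq 2$ use $u^2-r^2 \leq u^2$, and for $k=1$ use $u^2-r^2 \geq \tfrac{3}{4}u^2$). I therefore get
\begin{equation*}
\TT\psi(r) \lesssim \int_{R+\delta}^\infty \psi(u)\, u^{k-1}\, du, \qquad r\in[0,R],
\end{equation*}
with the right-hand side independent of $r$.

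Second, I estimate this integral by H\"older. Split $\psi(u)\,u^{k-1} = (\psi(u)\,u^{(d-1)/p})\cdot u^{(k-1)-(d-1)/p}$ and pair by the dual exponents $p,p'$: the first factor integrates to $\|\psi\|_p$, and the second yields $\bigl(\int_{R+\delta}^\infty u^{\alpha}\,du\bigr)^{1/p'}$ with $\alpha = \bigl((k-1)-(d-1)/p\bigr)\,p'$. Using $p=(d+1)/(k+1)$ and $p'=(d+1)/(d-k)$, a short computation gives
\begin{equation*}
\alpha \;=\; \frac{(k-1)(d+1)-(d-1)(k+1)}{d-k} \;=\; \frac{2(k-d)}{d-k} \;=\; -2,
\end{equation*}
so the remaining integral equals $(R+\delta)^{-1}$ and hence $\TT\psi(r) \lesssim \|\psi\|_p(R+\delta)^{-1/p'}$ uniformly on $[0,R]$.

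Finally, raising this bound to the $m$-th power and integrating against $r^{d-k-1}\,dr$ on $[0,R]$ (noting $d-k\geq 1$ so the integral is $\sim R^{d-k}$) yields
\begin{equation*}
\lr{\1_{[0,R]},(\TT\psi)^m} \;\lesssim\; \|\psi\|_p^m(R+\delta)^{-m/p'} \int_0^R r^{d-k-1}\,dr \;\lesssim\; \frac{R^{d-k}}{(R+\delta)^{m/p'}}\,\|\psi\|_p^m,
\end{equation*}
as claimed. The only delicate point in the whole argument is the exponent cancellation landing exactly on $-2$; this reflects the dilation invariance of \eqref{eq:map2} and is precisely what produces the sharp decay rate $(R+\delta)^{-1/p'}$. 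Everything else is routine.
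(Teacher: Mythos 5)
Your proof is correct and is essentially the paper's argument, just reorganized: rather than expanding $(\TT\psi)^m$ as an $m$-fold integral and bounding the inner $r$-integral, you note that the same kernel estimate $(u^2-r^2)^{k/2-1}\lesssim u^{k-2}$ (valid for $u\geq 2r$, in both cases $k=1$ and $k\geq 2$) gives an $r$-uniform pointwise bound on $\TT\psi$ over $[0,R]$, which you then raise to the $m$-th power and integrate. The H\"older split $u^{k-1}=u^{(d-1)/p}\cdot u^{-2/p'}$ and the resulting exponent $-2$ match the paper exactly, so the two proofs agree step for step.
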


\begin{proof}
It is only some simple calculation. Indeed,
\begin{align*}
 & \lr{\1_{[0,R]}, (\TT \psi)^m } = \disp{ \int_0^R \left(\int_{u\geq r} \psi(u) (u^2-r^2)^{k/2-1} udu \right)^m r^{d-k-1}dr }\\
    & = \disp{    \int_0^R \left(\int_{u_1\geq R+\delta} \psi(u_1)k(u_1,r)u_1du_1 \right) \cdot ... \cdot \left(\int_{u_m\geq R+\delta} \psi(u_m)k(u_m,r)u_mdu_m \right)  r^{d-k-1}dr } \\
    & = \disp{\int_{u_1\geq R+\delta} ... \int_{u_m\geq R+\delta} \psi(u_1) u_1 du_1 ... \psi(u_m) u_m du_m  \int_0^R k(u_1,r) ... k(u_m,r)  r^{d-k-1} dr },\\
\end{align*}    
where we denote  by $k(u,r)$ the kernel $(u^2-r^2)^{k/2-1}$. For $k \geq 2$,
\begin{equation*}
\int_0^R k(u_1,r) ... k(u_m,r)  r^{d-k-1} dr \leq u_1^{k-2} ... u_m^{k-2} R^{d-k}.
\end{equation*}
For $k=1$,
\begin{align*}
\int_0^R k(u_1,r) ... k(u_m,r)  r^{d-k-1} dr & \leq \int_0^R \dfrac{1}{\sqrt{u_1^2-r^2}} ... \dfrac{1}{\sqrt{u_m^2-r^2}}  r^{d-k-1} dr \\
    & \leq \dfrac{1}{u_1} ... \dfrac{1}{u_m} \int_0^R \left(1-\dfrac{r^2}{u_1^2} \right)^{-1/2} ... \left( 1-\dfrac{r^2}{u_m^2} \right)^{-1/2}  r^{d-k-1} dr \\
  &  \lesssim  \dfrac{1}{u_1} ... \dfrac{1}{u_m} R^{d-k}
\end{align*}
since $u_i \geq R+\delta \geq 2R \geq 2r$.

As a consequence,
\begin{align*}
\lr{\1_{[0,R]}, (\TT \psi)^m }        & \lesssim \int_{u_1\geq R+\delta} ... \int_{u_m\geq R+\delta} \psi(u_1) u_1 du_1 ... \psi(u_m) u_m du_m u_1^{k-2} ... u_m^{k-2} R^{d-k}\\
    & = \disp{ R^{d-k}  \left(\int_{u \geq R+ \delta} \psi(u) u^{k-1} du  \right)^m}.
\end{align*}
Noting that
\begin{equation*}
{u^{k-1} = u^{-\frac{2}{p'}} u^\frac{d-1}{p},}
\end{equation*}
the H\"older inequality leads to:
\begin{equation*}
\int_{u \geq R+ \delta} \psi(u) u^{k-1} du \lesssim \left(\int_{u \geq R+\delta} \psi(u)^p u^{d-1} du \right)^{1/p}  \left(\int_{u \geq R+\delta} u^{-2} du \right)^\frac{1}{p'} = \dfrac{1}{(R+\delta)^\frac{1}{p'}} | \psi |_p. 
\end{equation*}
Thus, finally,
\begin{equation*}
\lr{\1_{[0,R]}, (\TT \psi)^m} \lesssim \dfrac{R^{d-k}}{(R+\delta)^\frac{m}{p'}} | \psi |_p^m .
\end{equation*}
\end{proof}

For $n$ large enough, a direct application of Lemma \ref{lem:weak} leads to
\begin{equation*}
\lr{ \1_{[0,R_0]}, (\TT \gtd)^m } \lesssim \dfrac{R_0^{d-k}}{(R_0+d(R_0,\supp(\gtd))^\frac{m}{p'}} | \gtd |_p^m \lesssim \dfrac{1}{d(R_0,\supp(\gtd))^\frac{m}{p'}}.
\end{equation*}
Thus we get
\begin{equation*}
\lr{ (\TT \gtu)^{q-m}, (\TT \gtd)^m } \lesssim R_\epsi^{q-m} R_0^{(q-m)k} \dfrac{R_0^{d-k}}{d(R_0,\supp(\gtd))^\frac{m}{p'}}.
\end{equation*}
Making $n \rightarrow \infty$ leads to the conclusion. The uniformity of $R_\epsi$ in $n$ is crucial in the proof.
\end{proof}

Now let us chose two sequences $\alpha_n , \beta_n \rightarrow 1$ with $| \az_n \gtu |_p^p =1$ and $ | \beta_n \gtd |_p^p = 1-\az $. Then as $n \rightarrow \infty$,
\begin{equation*}
S_\az+S_{1-\az} \leftarrow S_\az + S_{1-\az} + o(1) \geq | \TT \az_n \gtu |_q^q + | \beta_n \TT \gtd |_q^q + o(1) = | \TT g_n |_q^q \rightarrow S_1
\end{equation*} 
which is a contradiction with \eqref{eq:strict}. Thus dichotomy cannot occur.

A simple consequence is:

\begin{cor}
$g_n$ is strongly tight, that is
\begin{equation*}
\disp{ \lim_{R \rightarrow \infty} \int_R^\infty |g_n|^p = 0 },
\end{equation*}
uniformly in $n$.
\end{cor}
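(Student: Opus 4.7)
The plan is to apply Lions' concentration--compactness dichotomy (Lemma \ref{lem:pl}) to the sequence $\phi_n=|g_n|^p$, viewed as nonnegative radial functions in $L^1(\R^d)$, so that the measure $r^{d-1}dr$ corresponds (up to the factor $|S^{d-1}|$) to Lebesgue measure on $\R^d$. The total mass $\|\phi_n\|_1 = |S^{d-1}|$ is independent of $n$, which is exactly the setting of Lions' trichotomy. Dichotomy has just been excluded, so the sequence must satisfy either tightness or vanishing.

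To rule out vanishing, I would invoke the lower bound $g_n \geq \1_{E_n}$ provided by Lemma \ref{lem:fund}, together with $E_n \subset [0,R_0]$ and $|E_n|\sim 1$. This forces
\[
\int_{B(0,R_0)} \phi_n \,dx \;=\; |S^{d-1}| \int_{E_n} r^{d-1}dr \;\gtrsim\; 1
\]
uniformly in $n$, which is incompatible with $\sup_{y} \int_{B(y,R_0)}\phi_n \to 0$. Hence the tightness alternative holds: there exist $y_n \in \R^d$ such that for every $\epsi>0$ one can find $R_\epsi>0$ with $\int_{B(y_n,R_\epsi)} \phi_n \geq |S^{d-1}| - \epsi$, uniformly in $n$.

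It then remains to convert tightness around drifting centers $y_n$ into tightness around the origin. This is the key step and it uses radial symmetry: since $\phi_n$ is rotation-invariant, $\int_{B(\sigma y_n, R_\epsi)} \phi_n = \int_{B(y_n, R_\epsi)} \phi_n$ for every $\sigma \in SO(d)$. If $|y_n|$ were much larger than $R_\epsi+R_0$, then $B(y_n,R_\epsi)$ would be disjoint from $B(0,R_0)$, and combining the two lower bounds $\int_{B(0,R_0)}\phi_n \gtrsim 1$ and $\int_{B(y_n,R_\epsi)}\phi_n \geq |S^{d-1}|-\epsi$ would exceed $\|\phi_n\|_1=|S^{d-1}|$ as soon as $\epsi$ is smaller than the $n$-independent lower bound on $\int_{B(0,R_0)}\phi_n$. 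Therefore $|y_n| \lesssim R_\epsi + R_0$, and $B(y_n,R_\epsi) \subset B(0, R'_\epsi)$ for some $R'_\epsi$ depending only on $\epsi$. Translating back to the one-dimensional formulation,
\[
\int_0^{R'_\epsi} |g_n|^p\, r^{d-1}dr \;\geq\; 1 - \epsi/|S^{d-1}|,
\]
which gives the desired uniform smallness of $\int_{R'_\epsi}^\infty |g_n|^p \, r^{d-1}dr$.

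The only delicate point, which I expect to be the (mild) main obstacle, is ensuring that the bound $|y_n| \lesssim R_\epsi$ coming from the rotation argument is quantitatively independent of $\epsi$, so that one can let $\epsi\to 0$ and obtain a genuine modulus of tightness. This is fine because the lower bound $\int_{B(0,R_0)}\phi_n \gtrsim 1$ is an absolute, $\epsi$-free constant fixed once and for all by Lemma \ref{lem:fund}.
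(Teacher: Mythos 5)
Your proposal is correct and follows essentially the same route as the paper: rule out vanishing via the lower bound $g_n\geq\1_{E_n}$ coming from Lemma~\ref{lem:fund}, invoke the already-excluded dichotomy to conclude tightness about some centers $y_n$, and then use $E_n\subset[0,R_0]$ to force the $y_n$ to stay bounded so that the tightness can be recentered at the origin. The paper states the last step almost without comment (``the sequence $y_n$ ... can be chosen to be $0$''), whereas you spell out the mass-counting argument that bounds $|y_n|$; that added detail is welcome, though the appeal to rotation invariance is not actually used in your disjointness estimate and could be dropped.
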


Indeed, since $|g_n|^p \geq \1_{E_n}$, $g_n$ cannot be vanishing. Thus $|g_n|^p$ must be tight. But since $E_n \subset [0,R_0]$ the sequence $y_n$ involved in $(i)$ in Lemma \ref{lem:pl} can be chosen to be $0$, involving a possible redefinition of $R_0$.

\section{Proof of Theorem \ref{thm:precompact2}.}\label{sec:4}

Here we prove Theorem \ref{thm:precompact2}. It is a consequence of the following Proposition:

\begin{proposition}\label{prop:converT}
$\TT g_n$ converges strongly in $L^q$.
\end{proposition}
We first start to prove that the operator $\TT$ is somehow locally compact.

\begin{lem}\label{lem:comp}
Let us consider  for $R>0$ the operator
\begin{center}
$\begin{array}{ccccc}
& \TT_R : & L^\infty([0,R]) & \longrightarrow & L^q([0,R])\\
& & f & \longmapsto & \TT \1_{[0,R]} f = \TT f. \end{array}$
\end{center}
Then $\TT_R$ is compact.
\end{lem}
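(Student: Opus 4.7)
The plan is to deduce the compactness of $\TT_R$ from Arzel\`a-Ascoli. Since $C([0,R]) \hookrightarrow L^q([0,R])$ continuously, it is enough to show that $\TT_R$ sends the unit ball of $L^\infty([0,R])$ into an equicontinuous, uniformly bounded family in $C([0,R])$. Write
\begin{equation*}
K(r,u) := \1_{\{r \leq u \leq R\}} (u^2 - r^2)^{k/2-1} u,
\end{equation*}
so that $\TT_R f(r) = \int_0^R K(r,u) f(u) \, du$.

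For $k \geq 2$ this is essentially immediate: $K$ extends to a continuous, bounded function on the compact triangle $\{(r,u) : 0 \leq r \leq u \leq R\}$, vanishing on the edge $u = r$ when $k > 2$ and equal to $u$ there when $k=2$. Uniform boundedness $|\TT_R f(r)| \lesssim R^k \|f\|_\infty$ is then trivial, and equicontinuity follows from the uniform continuity of $K$ on this compact triangle, combined with a trivial bound for the small strip where the indicator switches (which has measure $|r'-r|$ and bounded integrand). Arzel\`a-Ascoli finishes this case.

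The main obstacle is the case $k = 1$, where $K(r,u) = u/\sqrt{u^2-r^2}$ carries an integrable diagonal singularity. I would circumvent this by approximation. For each $\epsi > 0$ introduce the truncated operator
\begin{equation*}
\TT_R^\epsi f(r) := \int_{r+\epsi}^R f(u) \frac{u}{\sqrt{u^2-r^2}} \, du,
\end{equation*}
whose kernel is continuous and uniformly bounded (by $R/\epsi$) on the triangle $\{r+\epsi \leq u \leq R\}$, so that $\TT_R^\epsi$ is compact from $L^\infty([0,R])$ to $L^q([0,R])$ by the argument of the preceding paragraph. For $\|f\|_\infty \leq 1$,
\begin{equation*}
|(\TT_R - \TT_R^\epsi) f(r)| \leq \int_r^{\min(r+\epsi, R)} \frac{u}{\sqrt{u^2-r^2}} \, du = \sqrt{\min(r+\epsi,R)^2-r^2} \leq \sqrt{2R\epsi + \epsi^2},
\end{equation*}
so $\|\TT_R - \TT_R^\epsi\|_{L^\infty \to L^\infty} \to 0$ as $\epsi \to 0$, and hence also $\|\TT_R - \TT_R^\epsi\|_{L^\infty \to L^q} \to 0$. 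Since the compact operators form a norm-closed subspace of the bounded operators, $\TT_R$ is itself compact.
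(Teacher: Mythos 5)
Your proof is correct, and it takes a route that differs from the paper's in how it handles the diagonal singularity of the kernel. The paper proves equicontinuity of the family $\{\TT f : \|f\|_\infty \leq 1\}$ directly for all $k \geq 1$ at once: after the substitution $u^2 = v$, it splits the increment $\TT f(r+h) - \TT f(r)$ into a ``switched-indicator'' strip and a kernel-difference integral, and controls each with an $L^{3/2}$ H\"older estimate that is tuned so the singularity $(v-r^2)^{k/2-1}$ remains integrable even at $k=1$. You instead observe that for $k \geq 2$ the kernel is bounded on the closed triangle $\{r \leq u \leq R\}$, so equicontinuity (hence compactness via Arzel\`a--Ascoli) is immediate, and you isolate $k=1$ as the only genuinely singular case; there you replace the direct modulus-of-continuity estimate by an operator-theoretic one, approximating $\TT_R$ in $L^\infty\to L^\infty$ operator norm by the truncated operators $\TT_R^\epsi$ (each compact because its kernel is bounded, by the $k\geq 2$ argument) and invoking the norm-closedness of the compact operators. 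The paper's argument is unified across $k$ but requires the somewhat delicate H\"older exponent and some care with the competing singularities; yours is modular and arguably more transparent, and the approximation step is a robust, standard device that would generalize readily to any kernel with an integrable diagonal singularity. One small precision in your favor: the paper states Arzel\`a--Ascoli for ``$L^\infty([0,R])$,'' which only makes literal sense once one notes that the image functions are continuous; your phrasing in terms of $C([0,R])\hookrightarrow L^q([0,R])$ is the cleaner formulation.
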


We give the proof of this result in the appendix. Let us now define
\begin{equation*}
g_n^m := \1_{\{f_n \leq m\}} \1_{[0,m]} g_n,
\end{equation*}
\begin{equation}\label{eq:limi}
h^m := \lim_{n \rightarrow \infty} \TT g_n^m .
\end{equation}
The convergence in \eqref{eq:limi} occurs in $L^q$, because of the local compactness of $\TT$ -- Lemma \ref{lem:comp}. Indeed, the sequence $(g_n^m)_n$ is bounded in $L^\infty([0,m])$, thus the sequence $(\TT g_n^m)_n$ is compact in $L^q$. Moreover $0 \leq g_n^m \leq g_n^{m+1} \leq g_n$, implying $0 \leq \TT g_n^m \leq \TT g_n^{m+1} \leq \TT g_n $. It proves that the sequence $h^m$ is actually nondecreasing, nonnegative, and bounded in $L^q$. We can then apply the monotonous convergence theorem: there exists $h \in L^q$ with $h^m \rightarrow h$ strongly. We want to show now that $\TT g_n$ converges strongly to $h$.
\begin{align}
\lim_{n \rightarrow \infty} \TT g_n - h & = \lim_{n \rightarrow \infty} \lim_{m \rightarrow \infty} \TT g_n - h^m \\
 &=  \lim_{n \rightarrow \infty} \lim_{m \rightarrow \infty} \lim_{n \rightarrow \infty}  \TT g_n - \TT g_n^m \\
\label{eq:interv} &=  \lim_{n \rightarrow \infty} \lim_{m \rightarrow \infty} \TT g_n - \TT g_n^m = 0.
\end{align}
In \eqref{eq:interv}, we were allowed to change the order of the limits $n \rightarrow \infty$, $m\rightarrow \infty$ because of the uniform convergence (in $n$) of $g_n^m$ to $g_n$ as $m\rightarrow \infty$.

Now we know that $\TT$ is linear continuous from $L^p$ to $L^q$, so it is also continuous when these spaces are provided with the weak topology. It shows that  $\TT g_n \rightharpoonup \TT g$. But $\TT g_n \rightarrow h$, thus $\TT g = h$ and $\TT g_n \rightarrow \TT g$. This proves Proposition \ref{prop:converT}.\\

It implies that the sequence $g_n$ converges weakly to an extremizer, since $| g |_p \leq \liminf |g_n |_p = 1$. This also implies $| g |_p=1$. Using uniform convexity of $L^p$ this proves $g_n \rightarrow g$, which is Theorem \ref{thm:precompact2}.

\section{Preliminaries for Theorem \ref{thm:local}.}\label{sec:12}
\subsection{The manifold of extremizers}
Let $\EE$ be the set of extremizers of the inequality
\begin{equation*}
|\RR f|_q \leq A_\RR |f|_p,
\end{equation*}
that is, the set of non-zero functions defined by \eqref{eq:hg}. It is the orbit of the single extremizer $h_0$ defined by \eqref{eq:extrpa} under the action of $\Gg$

\begin{lem}\label{lem:tanR} \begin{enumerate}
\item[$(i)$] $\EE$ is a smooth manifold of dimension
\begin{equation*}
\dfrac{(d+2)(d+1)}{2}.
\end{equation*}
\item[$(ii)$] If $L$ is an affine map (non necessarily invertible) then for all $x \in \R^d$,
\begin{equation*}
\left( \left. \dfrac{d}{dt} \right|_{t=0}  h_0 \circ (\Id+tL) \right)(x) = - (k+1) \dfrac{\lr{Lx,x}}{1+|x|^2} h_0(x).
\end{equation*}
\item[$(iii)$] The tangent space $T_{h_0} \EE$ is given by
\begin{equation*}
T_{h_0} \EE = \left\{ x \mapsto \left( \az +  \dfrac{\lr{Sx+x_0,x} }{1+|x|^2} \right) h_0(x), \ \ S \in \Sym(\R^d), \ x_0 \in \R^d,  \ \az \in \R \right\}.
\end{equation*} 
\end{enumerate}
\end{lem}

\begin{proof} For $(i)$, note that
\begin{equation*}
\begin{matrix}
\phi : \R-\{0\} \times \Sym(\R^d) \times \R^d & \rightarrow & \EE \\
 (C,S,x_0)  & \mapsto  & \left( x \mapsto C h_0(Sx+x_0)\right)
\end{matrix}
\end{equation*}
is a smooth bijective map.
$(ii)$ is a simple calculation. For $(iii)$, note that every linear map can be decomposed uniquely as the sum of an antisymmetric map and a symmetric map, and that the contribution of the antisymmetric in the scalar product evaluation is $0$.
\end{proof}

\subsection{Conformal invariance of \eqref{eq:mapR}} Here we describe in what sense the inequality \eqref{eq:mapR} is conformally invariant. The base paper is \cite{Drury}, which we briefly recall here. For a point $x \in \R^d$, we define $\DD(x) \in \GG_{1,d+1}$ as the only straight line in $\R^{d+1}$ containing $(x,-1)$ and passing through the origin. If $f \in L^p(\R^d)$, we define $\Ii f$ through the implicit formula
\begin{equation*}
(\Ii f)(\DD(x)) = c_d^{-1} \lr{x}^{k+1} f(x).
\end{equation*}
Note that $h_0$ is mapped to the constant function $\1$ on $\GG_{1,d+1}$ by $\Ii$. 

Given a map $F : \GG_{1,d+1} \mapsto \R$ and $\PPP \in \GG_{k+1,d+1}$ we define
\begin{equation*}
\SSS F(\PPP) = \int_{\DD \subset \PPP} F(\DD) d\lambda_\PPP(\DD).
\end{equation*}
The space of ($1$-dimensional) lines in $\PPP$ is here again provided with the invariant probability measure $d\lambda_\PPP$. Drury's result is as follows:

\begin{theorem} (Drury) For all smooth compactly supported function $f : \R^d \rightarrow \R$,
\begin{equation*}
|\Ii f|_{L^p(\GG_{1,d+1})} = |f|_{L^p(\R^d)}
\end{equation*}
and
\begin{equation*}
c_d |\SSS f|_{L^q(\GG_{d+1,k+1})} = |\RRR f|_{L^q(\MM_{k,d})}.
\end{equation*}
\end{theorem}

Let us mention a few comments about this result. The sharp inequalities induced by the boundedness of
\begin{equation*}
\RRR : L^p(\R^d) \rightarrow  L^q(\MM_{k,d}), \  \ \SSS : L^p(\GG_{1,d+1}) \rightarrow L^q(\GG_{k+1,d+1})
\end{equation*} 
are equivalent. The great feature of $\SSS$ is its rotation invariance in the following sense. The orthogonal group $O(d+1)$ acts naturally on the Grassmanian manifolds $\GG_{1,d+1}$ and $\GG_{k,d+1}$. The operator $\SSS$ commutes with this action: if $\Omega$ is in $O(d+1)$,
\begin{equation*}
 \SSS (F \circ \Omega) = (\SSS F) \circ \Omega.
\end{equation*}
The inequality \eqref{eq:mapR} admits clearly $O(d)$ as a symmetry group. In order to prove Theorem \ref{thm:mapR}, \cite{Christ4} and \cite{Drouot} used a hidden symmetry, whose origin was unclear. It happens that this symmetry has a very simple form on the $\SSS$-side: it is the reflection across the hyperplane $\{ (x'',x_d,x_{d+1}) : x_d+x_{d+1} = 0 \}$ in $\R^{d+1}$. This clarifies its origin\footnote{Neither Pr. Christ nor the author were aware of Drury's paper while writing \cite{Christ4} and \cite{Drouot}.}.

Rotational invariance usually implies diagonalization using spherical harmonics. Here as $\SSS$ maps functions on $\GG_{1,d+1}$ to functions on $\GG_{k+1,d+1}$ it is slightly more subtle but the situation is similar. We will make a great use of this feature in Section \ref{sec:13}: it will allow us to diagonalize explicitly a selfadjoint operator. This is why in order to prove Theorems \ref{thm:global} and \ref{thm:local} we use the equivalence between $\RRR$ and $\SSS$ and prove the Theorem for the $\SSS$-inequality. In order to do that we first reformulate Theorem \ref{thm:mapR} in terms of the operator $\SSS$. The free, transitive action of $\Gg$ on extremizers of the $\RRR$-inequality induces a free, transitive action of a group $\Hh$ (which is simply the conjugated of $\Gg$ by $\Ii$) on extremizers for the $\SSS$-inequality. If $F$ is an extremizer for $S$ and $\phi \in \Hh$ we denote by $\phi \star F$ this action. This gives:

\begin{theorem} For all $F \in L^p(\GG_{1,d+1})$,
\begin{equation*}
\dfrac{|\SSS F|_{L^q(\GG_{k,d+1})}}{|F|_{L^p(\GG_{1,d+1})}} \leq A_\SSS, \ \ \ A_\SSS = \dfrac{A_\RR}{c_d} = \left( 2^{k-d} |S^k|^d |S^d| \right)^{\frac{1}{d+1}}
\end{equation*}
with equality if and only if for some symmetry $\phi \in \Hh$,
\begin{equation}\label{eq:extS}
F = \phi \star \1.
\end{equation}
\end{theorem}

We denote by $\FF$ the set of extremizers, which is, by \eqref{eq:extS}, the orbit of the constant function $\1$ under the group of symmetries $\Hh$. The function $\1$, by its simplicity, is the easiest extremizer to use. We can reformulate Lemma \ref{lem:tanR} in the following:

\begin{lem}\label{lem:tanS} \begin{enumerate}
\item[$(i)$] $\FF$ is a smooth manifold of dimension
\begin{equation*}
\dfrac{(d+2)(d+1)}{2}.
\end{equation*}
\item[$(ii)$] The tangent space $T_\1 \FF$ can be identified to the space spanned by spherical harmonics of order $0$ and $2$.
\end{enumerate}
\end{lem}

\begin{proof} For $(i)$, just note that $d\Ii_{h_0}$ is a invertible linear map from $T_\1 \EE$ to $T_{h_0} \FF$. As a consequence,
\begin{equation*}
\dim (T_\1 \FF) = \dim (T_{h_0} \EE) = \dim(\EE) = \dfrac{(d+1)(d+2)}{2}.
\end{equation*}
For $(ii)$ note that the Grassmanian $\GG_{1,d+1}$ is simply the homogeneous space $\Ss^d/\{\pm1\}$. Therefore functions on $\GG_{1,d+1}$ can be identified to even functions of the sphere. Denote $V_0$ the space of constant functions on $\Ss^d$ and $V_1$ the space spanned by second order spherical harmonics. The dimension of $V_0 \oplus V_1$ is given by (see \cite{Samko}, Lemma $1.4$)
\begin{equation*}
\dfrac{(d+1)(d+2)}{2},
\end{equation*}
As a consequence it will suffice to prove that under the identification mentioned above $T_1\FF \subset V_0 \oplus V_1$. We have
\begin{equation*}
T_\1 \FF  = d\Ii_{h_0} \left(T_{h_0}\EE\right) = \Ii(T_{h_0} \EE)
\end{equation*}
as $\Ii$ is a linear transformation. $T_{h_0} \EE$ was identified in Lemma \ref{lem:tanR}. If $\az \in \R, x_0 \in \R^d$, and $S$ is symmetric, 
\begin{equation*}
\Ii\left(  \left( \az + \dfrac{\lr{Sx+x_0, x}}{1+|x|^2} \right)h_0\right)( \DD(x)) = \az + \dfrac{\lr{Sx+x_0,x}}{1+|x|^2}. 
\end{equation*}
The case $S=0, x_0=0$ leads to functions in $V_0$. Let us work on the case $\az=0$ now. We must prove that 
\begin{equation*}
\DD(x) \mapsto \dfrac{\lr{Sx+x_0,x}}{1+|x|^2} \in V_1.
\end{equation*}
The natural action of an element $\Omega \in O(d) \times \{1\} \subset O(d+1)$ leaves the RHS invariant and changes $S$ to $\Omega^{-1}S\Omega$, $x_0$ to $\Omega^{-1} x_0$. Using the spectral theorem it then suffices to prove:
\begin{equation*}
\DD(x) \mapsto \dfrac{x_1^2}{1+|x|^2} \in V_1 \ \text{and} \ \DD(x) \mapsto \dfrac{x_1}{1+|x|^2} \in V_1.
\end{equation*}
In spherical coordinates $(\te,\varphi_1,..., \varphi_{d-1}) \in \Ss^d$,
\begin{equation*}
\sin(\te)\cos(\varphi_1)  = \dfrac{x_1}{\lr{x}}, \ \ \cos(\te) = -\dfrac{1}{\lr{x}}.
\end{equation*}
As a consequence, using the identification $\GG_{1,d+1} \equiv \Ss^d/\{\pm 1\}$, it is sufficient to check that  
\begin{equation*}
(\te,\varphi_1, ..., \varphi_{d-1}) \mapsto \sin^2(\te)\cos(\varphi_1)^2,  \ \ (\te,\varphi_1, ..., \varphi_{d-1}) \mapsto \cos(\te)\sin(\te)\cos(\varphi_1)
\end{equation*}
are spherical harmonics of order $2$ for $\Delta_{\Ss^d}$ -- which is true.
\end{proof}

Because of all that, Theorem \ref{thm:local} is equivalent to
\begin{theorem}\label{thm:local2}
 There exists a constant $\epsi_0 >0$ such that the following is valid. For all $1 \leq k \leq d-1$,
\begin{equation*}
\dfrac{|\SSS F|_q}{|F|_p}\leq A_\SSS \left( 1 - \lambda_\SSS \left( \dfrac{d^\star(F,\FF)}{|F|_p} \right)^2 + o\left(\left( \dfrac{d^\star(F,\FF)}{|F|_p} \right)^{2+\epsi_0}\right)\right),
\end{equation*}
where the constant $\lambda_\SSS$ is defined by
\begin{equation*}
\lambda_\SSS = \dfrac{d-k}{2(d+1)}-\dfrac{1}{2d} \left(\dfrac{3}{d+2}\right)^2.
\end{equation*}
\end{theorem}

The quantity $d^\star(F,\FF)$ will be defined below. The constant $\lambda_\SSS$ is optimal.

\section{Bounds on $Q_\1$.} \label{sec:13}
Recall $\HH  = L^2(\GG_{1,d+1})$. As we saw the tangent space $T_\1 \FF$ is spanned by some spherical harmonics, which are smooth bounded functions on the sphere. Therefore it can be seen as a subspace of $\HH$. Define the orthogonal space at $\1$:
\begin{equation*}
T_\1 \FF ^\perp = \left\{ G \in \HH, \int_{\GG_{1,d+1}} G H = 0 \ \text{for all} \ H \in T_\1 \FF \right\} 
\end{equation*}
We want to study, for $G \in T_\1 \FF ^\perp$
\begin{equation}\label{eq:hZ}
t \mapsto \dfrac{|\SSS(\1+tG)|_q}{A_\SSS|\1+tG|_p}
\end{equation}
as $t \rightarrow 0$. A natural strategy is to study the natural quadratic form appearing while doing a formal Taylor development. Unfortunately this development can be only formal as the function defined by \eqref{eq:hZ} is not $C^2$ in general. We will fix this technical difficulty in the next session.\\

This quadratic form is formally defined by
\begin{equation*}
Q_{\1}(G) = \dfrac{q-1}{2 |\SSS \1|_q^q} \int_{\GG_{k+1,d+1}} (\SSS G)^2 (\SSS \1)^{q-2} - \dfrac{p-1}{2} \int_{\GG_{1,d+1}} G^2,
\end{equation*}
see \cite{Christ5}, Section $15$. Here, $\SSS \1$ is a constant, whose value is $|\SSS \1|_q = A_\SSS$. Therefore
\begin{equation*}
Q_{\1}(G) = \dfrac{q-1}{2 A_\SSS^2} \int_{\GG_{k+1,d+1}} (\SSS G)^2 - \dfrac{p-1}{2} \int_{\GG_{1,d+1}} G^2.
\end{equation*}

In order to prove \ref{thm:local2} we need at least to be able that the quadratic form $Q_\1$ is negative on $T_\1 \FF ^\perp$ -- it clearly cannot be negative on the whole $\HH$ as $\FF$ has some path connected subsets. As noted first by \cite{BiaEgn} for the Sobolev inequality this can be done using the minmax principle to the operator $\LL =\SSS^* \SSS$ on $\HH$. Thus we need to perform a spectral analysis of this operator.

\begin{lem} The operator $\LL$ is bounded and selfadjoint from $\HH$ to $\HH$. 
\end{lem}

\begin{proof}
$\LL$ naturally maps $L^p(\GG_{1,d+1}) \rightarrow L^{p'} (\GG_{1,d+1})$. If $p \leq 2$, then $\HH \hookrightarrow L^p(\GG_{1,d+1})$ and $L^{p'}(\GG_{1,d+1}) \hookrightarrow \HH$. As a consequence $\LL$ maps $\HH$ to $\HH$. 

If $p \geq 2$, then as $2 \leq p \leq q$ and $\SSS$ maps $L^1(\GG_{1,d+1})$ to $L^1(\GG_{k+1,d+1})$ we can use interpolation theory to get $\SSS : \HH \rightarrow L^r(\GG_{k+1,d+1})$ for some $r \geq 2$. Now $\SSS^* : L^{r'}(\GG_{k+1,d+1}) \rightarrow \HH$. Thus again $\LL$ maps $\HH$ to $\HH$.
\end{proof}

Using rotational invariance of $\SSS$, $\LL$ is also rotation-invariant and acts from function on $\GG_{1,d+1}$ to functions on $\GG_{1,d+1}$. The space $\GG_{1,d+1}$ is a homogeneous space (with isometry group $O(d+1)$). Therefore $\LL$ commutes with the Laplace-Beltrami operator $\Delta_{\GG_{1,d+1}}$. It follows that these two operators have the same eigenfunctions and so we can explicitly diagonalize $\LL$ in terms of $\Delta_{\GG_{1,d+1}}$. But the Grassmanian $\GG_{1,d+1}$ is simply $\Ss^d/\{ \pm 1 \}$. It follows that eigenfunctions of $\Delta_{\GG_{1,d+1}}$ are simply even spherical harmonics. The following Lemma follows:

\begin{lem}\label{lem:boundQ} \begin{enumerate}
\item[$(i)$] The operator $\Delta_{\GG_{1,d+1}}$ has eigenvalues $-2\ell ( 2\ell + d-1)$, $\ell = 0, 1, ...$. The sum of the first and the second eigenspaces is $T_\1 \FF$.
\item[$(ii)$] The spectrum of the operator $\LL$ is given by an explicit decreasing sequence $\lambda_\ell$.
\item[$(iii)$] If $\lambda_\SSS$ is the (positive) number defined by
\begin{equation*}
\lambda_\SSS = \dfrac{d-k}{2(d+1)}-\dfrac{1}{2d} \left(\dfrac{3}{d+2}\right)^2  
\end{equation*}
then for all $G \in T_\1 \FF^\perp$, 
\begin{equation*}
Q_\1(G) \leq -\lambda_\SSS |G|_\HH^2.
\end{equation*}
\end{enumerate}
\end{lem}

\begin{proof} 
$(i)$ follows from the fact that functions on $\GG_{1,d+1}$ are even functions on the sphere and that the Laplace-Beltrami operator on $\Ss^d$ restricted to even functions has spectrum $2\ell(2\ell+d-1)$ -- see \cite{Samko}, Lemma $1.11$. 

\item[$(ii)$] Use again the identification $\GG_{1,d+1} \equiv \Ss^d/\{ \pm 1 \}$ and \cite{Samko}, Theorem $6.4$. We are concerned only by even functions, thus we restrict our attention to even order eigenspaces. The  eigenvalues of $\LL$, denoted by $\lambda_\ell$ are given by the equation
\begin{equation*}
\LL Y_{2\ell}^0 = \lambda_\ell Y_{2\ell}^0. 
\end{equation*}
Here $Y_{2\ell}^0$ is a zonal spherical harmonic of order $\ell$ (that is, the normalized spherical harmonic of order $2\ell$ on $\Ss^d$ that is invariant under the action of $O(d) \times \{ 1 \}$).\\

In order to compute $\lambda_\ell$ we note that $Y_{2\ell}^0$ is radial in the following sense: it is invariant under the natural action of the group $O(d) \times \{ 1 \}$. Therefore $\SSS Y_{2\ell}^0$, which is a function on the Grassmanian $\GG_{k+1,d+1}$, can be seen as a function on $\GG_{k+1,d+1}/O(d) = \Ss^1/\{\pm 1\}$. It implies that $\SSS$, while restricted to the space
\begin{equation*}
\bigoplus_{\ell=0}^\infty \spann (Y^0_{2\ell})
\end{equation*}
can be seen as an operator acting on even functions on $\Ss^1$. Moreover, using rotation invariance again, there exists $\mu_\ell$ such that with this identification,
\begin{equation*}
\SSS Y_{2\ell}^0 = \mu_\ell Y_{2\ell}^0.
\end{equation*}
The constant $\mu_\ell$ is thus given by
\begin{equation*}
\mu_\ell = \dfrac{(\SSS Y_{2\ell}^0)(e_{d+1})}{Y_{2\ell}(e_{d+1})}
\end{equation*}
Note that $(\SSS Y_{2\ell}^0)(e_{d+1})$ and $Y_{2\ell}^0(e_{d+1})$ are related to the Gegenbauer polynomial (see \cite{Samko}, Section $1.2$) $C_{2\ell}^{(d-1)/2}$ of degree $2\ell$ by
\begin{equation*}
(\SSS Y_{2\ell}^0)(e_{d+1}) = \left( \SSS \1 \right)\matrice{2\ell + d-2 \\ 2 \ell} C_{2\ell}^{(d-1)/2}(0), \ \ Y_{2\ell}^0(e_{d+1}) = \matrice{2\ell + d-2 \\ 2 \ell} C_{2\ell}^{(d-1)/2}(1).
\end{equation*}
Using $\SSS \1 = A_\SSS$ this gives
\begin{equation*}
\mu_\ell = A_\SSS \dfrac{C_{2\ell}^{(d-1)/2}(0)}{C_{2\ell}^{(d-1)/2}(1)}
\end{equation*}
The Gegenbauer polynomials are defined by their generating functional
\begin{equation*}
\left(1-2tx + x^2 \right)^{-\frac{d-1}{2}} = \sum_{m=0}^\infty C_m^{(d-1)/2} (t) x^m.
\end{equation*}
Evaluating this at $t=0,1$ gives:
\begin{equation*}
\left(1 + x^2 \right)^{-\frac{d-1}{2}} = \sum_{m=0}^\infty C_m^{(d-1)/2} (0) x^m, \ \ \left(1-x\right)^{-(d-1)} = \sum_{m=0}^\infty C_m^{(d-1)/2} (1) x^m.
\end{equation*}
On the other hand,
\begin{equation*}
\left(1 + x^2 \right)^{-\frac{d-1}{2}} = 1 - \dfrac{d-1}{2} x^2 + ... + \dfrac{(-1)^\ell}{2^\ell\ell!} \left( d-1 \right)\left( d+1 \right)....\left( d+ 2 \ell-3 \right) x^{2\ell} + o(x^{2\ell})
\end{equation*}
and 
\begin{equation*}
\left(1-x\right)^{-(d-1)} = 1 + (d-1) x + .... + \dfrac{1}{(2\ell)!} \left(d-1 \right)d\left( d +1 \right) ... \left(d + 2 \ell - 2 \right) x^{2\ell} + o(x^{2\ell}).
\end{equation*}
This implies:
\begin{equation*}
\mu_\ell = A_\SSS\dfrac{C_{2\ell}^{(d-1)/2}(0)}{C_{2\ell}^{(d-1)/2}(1)} = (-1)^\ell A_\SSS \dfrac{ (2\ell)!}{2^\ell \ell !} \dfrac{(d-1)(d+1)...(d+2\ell -3)}{(d-1)d(d+1)...(d+2 \ell -2)}.
\end{equation*}
We therefore get
\begin{equation*}
\left|\dfrac{\mu_{\ell+1}}{\mu_{\ell}}\right| = \dfrac{2\ell +1}{2\ell + d}.
\end{equation*}
This ratio is strictly less than $1$ since $d \geq 2$, implying that $\mu_\ell^2 = \lambda_\ell$ is decreasing.\\

$(iii)$ We have 
\begin{equation*}
\mu_0 = A_\SSS, \ \mu_1 = -\dfrac{A_\SSS}{d}, \ \mu_2 = \dfrac{3A_\SSS}{d(d+2)}.
\end{equation*}
Therefore
\begin{equation*}
\lambda_2 = \left(\dfrac{3A_\SSS}{d(d+2)}\right)^2
\end{equation*}
leading to 
\begin{equation*}
Q(Y_4^0) = \dfrac{q-1}{2A_\SSS^2} \lambda_2 - \dfrac{p-1}{2} = \dfrac{1}{2d} \left(\dfrac{3}{d+2}\right)^2 - \dfrac{d-k}{2(d+1)} \leq \dfrac{1}{2d} \left(\dfrac{3}{d+2}\right)^2 - \dfrac{1}{2(d+1)}.
\end{equation*}
This is negative for all $d \geq 2$.

Moreover, by the minmax principle, if $G \in T_\1 \FF$ then
\begin{equation*}
Q_\1(G) = \dfrac{q-1}{2A_\SSS^2} \lr{\LL G, G}_\HH - \dfrac{p-1}{2} |G|_\HH^2 \leq -\lambda_\SSS |G|_\HH^2.
\end{equation*}
This proves point $(iii)$.
\end{proof}

\section{About the second variation.}\label{sec:14}
As we said in the previous section our aim was to study the second variation associated to the function \eqref{eq:hZ}. Unfortunately this functional is not $C^2$ for $p \leq 2$, just because 
\begin{equation*}
t \mapsto |1+tG(x)|^p
\end{equation*}
is not $C^2$ when $G(x)$ is large. This is reflected in the fact that $L^p(\GG_{1,d+1})$ is not embedded in $\HH$ for $p < 2$. In order to go around this technical problem we use \cite{Christ5}, Proposition $15.1$, which we reformulate for our purpose here.

\begin{proposition}\label{prop:p2} Assume $p<2$. There exist some constant $c,C, t_0 >0$ and $\gamma>1$ with the following property. For $0 < t \leq t_0$ and for all $G \in L^p(\GG_{1,d+1})$ with 
\begin{equation*}
\int_{\GG_{1,d+1}} G = 0, \ \ \int_{\GG_{1,d+1}} G^p = 1,
\end{equation*}
split $G=G_\sharp+G_\flat$ with
\begin{equation*}
G_\sharp(x) = \system{ G(x)  & \ \text{if} \ |G(x)| \leq t^{-1+\gamma^{-1}}, \\  0 & \ \text{otherwise.} }
\end{equation*}
Then $G_\sharp \in \HH$ and
\begin{equation*}
\dfrac{|\SSS(\1+tG)|_q}{A_\SSS|\1+tG|_p} \leq 1 + t^2 Q_\1(G_\sharp) + Ct^{2+\gamma^{-1}} |G_\sharp|_p^2 - c t^{(2-p)\gamma^{-1}+p} |G_\flat|_p^p.
\end{equation*}
\end{proposition}

Now consider $G \in L^p(\GG_{1,d+1}) \cap T_\1\FF^\perp$, with norm $1$. Then
\begin{equation*}
Q_\1(G) \leq 1 + t^2 Q_\1(G_\sharp) + Ct^{2+\gamma^{-1}} |G_\sharp|_p^2 - c t^{(2-p)\gamma^{-1}+p} |G_\flat|_p^p.
\end{equation*}
Unfortunately, $G_\sharp$ does not need to belong to $T_\1\FF^\perp$, and so we cannot directly apply Lemma \ref{lem:boundQ}. Write instead
\begin{equation*}
G_\sharp  = G_\sharp^\perp + G_\sharp^\top,
\end{equation*}
with $G_\sharp^\perp \in T_\1 \FF^\perp$ and $G_\sharp^\top \in T_1\FF$. Then
\begin{equation*}
Q_\1(G_\sharp) = Q_\1(G_\sharp^\perp) + Q_\1(G_\sharp^\top).
\end{equation*}
We can apply Lemma \ref{lem:boundQ} to the first term. Let $V_i$, $1 \leq i \leq n$ be a basis of $T_\1\FF$. The space $T_\1 \FF$ is finite dimensional so there exists a constant $C'$ with
\begin{equation*}
Q_\1(G_\sharp^\top) \leq C'\sum_{i=1}^n \left|\int_{\GG_{1,d+1}} G_\sharp^\top V_i\right|^2.
\end{equation*}
This leads to
\begin{equation*}
Q_\1(G_\sharp) \leq Q_\1(G_\sharp^\perp) + C' \sum_{i=1}^n \left|\int_{\GG_{1,d+1}} G_\sharp^\top V_i\right|^2.
\end{equation*}
As $G_\sharp^\perp, G \in T_\1 \FF^\perp$ and $G_\sharp^\top = G - G_\sharp^\perp - G_\flat$, we get
\begin{align*}
Q_\1(G_\sharp) & \leq -\lambda_\SSS |G_\sharp|^2_\HH + C' \sum_{i=1}^n \left|\int_{\GG_{1,d+1}} G_\flat V_i\right|^2 \\
    & \leq -\lambda_\SSS |G_\sharp|^2_\HH + C' |G_\flat|^2_p
\end{align*}
In the last inequality we used H\"older's inequality and the fact that $T_\1\FF \subset L^\infty(\GG_{1,d+1})$. Group this inequality with the conclusion of Proposition \ref{prop:p2} to get that for $t \leq t_0$,
\begin{equation*}
\dfrac{|\SSS(\1+tG)|_q}{A_\SSS|\1+tG|_p} \leq 1 - \lambda_\SSS t^2 |G_\sharp|^2_\HH + Ct^{2+\gamma^{-1}} |G_\sharp|_p^2 - c t^{(2-p)\gamma^{-1}+p} |G_\flat|_p^p  + C' t^2 |G_\flat|^2_p.
\end{equation*}
We use this inequality in the next Session.

\section{Proof of Theorems \ref{thm:global} and \ref{thm:local}.}\label{sec:15}

As we are working in a non-Hilbertian case we define the linearized normal space $\tT_\1\FF^\perp \subset L^p(\GG_{1,d+1})$ as
\begin{equation*}
\tT_\1 \FF^\perp = \left\{ G \in L^p(\GG_{1,d+1}), \ \int_{\GG_{1,d+1}} GH^{p-1} = 0 \ \text{for all} \ H \in T_\1 \FF \right\}. 
\end{equation*}

\begin{lem}\label{lem:orth} There exists $\delta > 0$ such that if 
\begin{equation*}
d(F,\FF) := \inf_{H \in \FF} |H-F|_{L^p(\GG_{1,d+1})} \leq \delta |F|_p,
\end{equation*}
then we can decompose uniquely $F = H + G$, where $H \in \FF$ and $G \in \tT_H \FF^\perp$. 
\end{lem}

\begin{proof} We note that first it suffices to prove the Lemma when $|F|_p=1$, and second that we can assume that $\1$ minimizes the distance between $F$ and $\FF$ -- using that extremizers are unique modulo symmetries. Consider the map
\begin{equation*}
\begin{matrix}
 \Phi & : & \FF \times \tT_\1 \FF^\perp  &  \rightarrow &  L^p(\GG_{1,d+1}) \\
      &   & (H,G) & \mapsto & G+H.
\end{matrix}
\end{equation*}
$\Phi(\1,0) = \1$ and $d\Phi_{(\1,0)}= \Id$. Therefore every function $F$ with $|F-\1|_p$ small enough can be written as $H+G$ where $H \in \FF$ and $G \in \tT_\1 \EE^\perp$.
\end{proof}

It should be noted that $H$ does not minimize the distance between $F$ and extremizers. We will define for later use 
\begin{equation*}
d^\star(F,\FF) = |G|_p.
\end{equation*}
when $F$ satisfies the assumptions of Lemma \ref{lem:orth}. It is clear that $d^\star(F,\FF) \geq d(F,\FF)$. Moreover, because of the proof of Lemma \ref{lem:orth}, the reverse inequality is almost true: if $d(F,\FF)$ is small enough, there exists a constant $C$ such that
\begin{equation*}
C d(F,\FF) \geq d^\star(F,\FF).
\end{equation*}

Now we are ready to prove the Theorems.

\begin{proof} To prove Theorem \ref{thm:local} it suffices to prove Theorem \ref{thm:local2}. Consider $F$ with $|F|_p=1$ and $d(F,\FF) \leq \delta$. Write $F=H+tG$ with $H \in \FF$, $G \in \tT_{H}\FF^\perp$ with $|G|_p = 1$, and $t=d^\star(G,\FF)$.  Using the group of symmetries we can assume that $H = \1$ and $G \in \tT_\1\FF^\perp$

We start by the case $p \geq 2$. In this case Section \ref{sec:14} is not needed as the function \eqref{eq:hZ} is $C^2$ near $0$. Therefore for some $\epsi_0 >0$ depending only on $p,q$,
\begin{equation*}
\dfrac{|\SSS F|_q}{A_\SSS} = 1 + t^2 Q_\1(G) + o(t^{2+\epsi_0}) \leq 1 - \lambda_\SSS d^\star(F,\FF)^2+ o \left(d^\star(F,\FF)^{2+\epsi_0} \right).
\end{equation*}
This proves the local version.

Now let us treat the case $p < 2$. We have
\begin{align*}
\dfrac{|\SSS F|_q}{A_\SSS} & = \dfrac{|\SSS \phi \star(\1+t G)|_q }{A_\SSS|\phi \star(\1+t G)|_p} = \dfrac{|\SSS (\1+t G)|_q}{A_\SSS|\1+t  G|_p} \\
  & \leq 1 - \lambda_\SSS t^2 |G_\sharp|^2_\HH + Ct^{2+ \gamma^{-1}} |G_\sharp|_p^2 - c t^{(2-p)\gamma^{-1}} t^p |G_\flat|_p^p  + C' t^2 |G_\flat|^2_p.
\end{align*}
Note that as $p <2$, $\HH \hookrightarrow L^p(\GG_{1,d+1})$. Therefore, 
\begin{equation*}
|G_\sharp|_\HH \geq |G_\sharp|_p \geq |G|_p - |G_\flat|_p = 1 - |G_\flat|_p.
\end{equation*}
This implies 
\begin{equation*}
\dfrac{|\SSS F|_q}{A_\SSS} \leq 1 - \lambda_\SSS t^2 + Ct^{2+ \gamma^{-1}} |G|_p^2 - c t^{(2-p)\gamma^{-1}} t^p |G_\flat|_p^p  + (C'+\lambda_\SSS) t^2 |G_\flat|^2_p.
\end{equation*}
Note that as $1 < p < 2$ and $\gamma > 1$, $t^2 = O(t^{(2-p)\gamma^{-1}})$. As a consequence if $t$ is small enough the bad term $(C'+\lambda_\SSS) t^2 |G_\flat|^2_p$ can be entirely absorbed by the good term $- c t^{(2-p)\gamma^{-1}} t^p |G_\flat|_p^p$, leading to 
\begin{equation*}
\dfrac{|\SSS F|_q}{A_\SSS} \leq 1 - \lambda_\SSS t^2 + O(t^{2+\epsi}) = 1 - \lambda_\SSS d^\star(F,\FF)^2 + o\left( d^\star(F,\FF)^{2+\epsi_0} \right)
\end{equation*}
for some $\epsi_0>0$. This concludes the proof of Theorem \ref{thm:local2} and thus of Theorem \ref{thm:local}.

The proof of Theorem \ref{thm:global} follows from the standard arguments of \cite{BiaEgn}. We prove it for $k=d-1$. The same exact proof applies for $k \neq d-1$ with the radial extra-assumption. Then there exists a sequence $F_n$ such that
\begin{equation*}
|F_n|_p = 1, \ \ \dfrac{|\SSS F_n|_q}{A_\SSS} \geq 1 - \dfrac{1}{n} d(F_n,\FF)^2.
\end{equation*}
As $d(F_n,\FF) \leq |F_n|_p \leq 1$, we can assume that $d(F_n,\FF)$ converges to some value $\az \in [0,1]$. 

If $\az=0$, then using \cite{Christ4} there must exists a sequence of symmetries $\phi_n$ so that $\phi_n \star F_n$ converges to $\1$. If $n$ is large enough, then by Theorem \ref{thm:local2}
\begin{align*}
1 - \dfrac{1}{n} d(F_n,\FF)^2 & \leq \dfrac{|\SSS F_n|_q}{A_\SSS} \\  & = \dfrac{|\SSS \phi_n \star F_n|_q}{A_\SSS} \leq 1 - \lambda_\SSS d^\star(\phi_n \star F_n,\FF)^2 + o\left( d^\star(\phi_n \star F_n,\FF)^{2+\epsi_0} \right).
\end{align*}
Recall that as $\phi_n \star F_n$ goes to $\1$, $d^\star(\phi_n \star F_n, \FF)$ is comparable to $d(\phi_n \star F_n,\FF)$. Thus $0 < \lambda_\SSS \leq n^{-1}$, which is a contradiction.\\

If $\az > 0$, then
\begin{equation*}
\dfrac{|\SSS F_n|_q}{A_\SSS} \geq 1 - \dfrac{d(F_n,\FF)}{n} \rightarrow 1.
\end{equation*}
Therefore $F_n$ is an extremizing sequence. But then after suitable rescaling it converges to an extremizer, which is not possible as $d(F_n,\FF) \rightarrow \az$.
\end{proof}

\noindent \textbf{Remark:} \textit{(Sharpness of the constant $\lambda_\SSS$).} The constant $\lambda_\SSS$ in Theorem \ref{thm:local2} is optimal. Indeed, if $Y_4^0$ denotes the zonal fourth order spherical harmonic seen as a function on $\GG_{1,d+1}$ then $\1 + t Y_4^0 \in L^p(\GG_{1,d+1})$ and
\begin{equation*}
\dfrac{|\SSS (\1+t Y_4^0)|_q}{A_\SSS|\1+t Y_4^0|_p} = 1 - t^2 Q(Y_4^0) + o(t^2) = 1 - \lambda_\SSS t^2 + o(t^2).
\end{equation*}
Note that we do not know the optimal value of the constant $\lambda_\RRR$, even in the case where $d(f,\EE)$ is replaced by $d^\star(f,\EE)$. This comes from the fact that
\begin{equation*}
d^\star(f,\EE) \neq d^\star(\Ii f,\FF).
\end{equation*}
This is related to the fact that there is \textit{a priori} no isometric connection between $\SSS$ and $\RRR$ when $p,q$ are not given by \eqref{eq:pq}.

\vspace{0.5cm}
\begin{center}
\noindent
{\sc  Appendix 1: {Compactness of the restricted operator $\TT_R$}} 
\end{center}
\vspace{0.4cm}
\renewcommand{\theequation}{A.\arabic{equation}}
\refstepcounter{section}
\renewcommand{\thesection}{A}
\setcounter{equation}{0}

Here we want to prove lemma \ref{lem:comp}. The operator $\TT_R$ is formally defined as $\TT_R := \TT \1_{[0,R]}$ and maps $L^\infty([0,R])$ to itself. Moreover, since $L^\infty([0,R]) \hookrightarrow L^q([0,R])$, we just have to prove that $\TT_R : L^\infty([0,R]) \rightarrow L^\infty([0,R])$ is compact.\\

Compactness in $L^\infty([0,R])$ is expressed through the standard Arzel\`a-Ascoli theorem: 

\begin{theorem}
Let $\PP \subset L^\infty([0,R])$. Then $\PP$ is relatively compact if and only if it is bounded in $L^\infty([0,R])$ and equicontinuous.
\end{theorem}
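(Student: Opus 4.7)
The statement is a version of the classical Arzelà-Ascoli theorem, and I would prove it by the standard two-step argument, paying only a little attention to the fact that we live in $L^\infty([0,R])$ rather than $C([0,R])$. Equicontinuity in $L^\infty$ should be understood as equicontinuity of representatives; any equicontinuous family automatically consists of continuous functions, so throughout the proof one may identify $\PP$ with a subset of $C([0,R])$ endowed with the sup norm, which coincides with the $L^\infty$-norm on continuous functions.

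For the direction ($\Rightarrow$), suppose $\PP$ is relatively compact in $L^\infty([0,R])$. Compactness in a normed space immediately yields boundedness. For equicontinuity, I use total boundedness: given $\epsi>0$, cover $\overline{\PP}$ by finitely many balls of radius $\epsi/3$ centered at functions $f_1,\dots,f_N \in C([0,R])$. Each $f_i$ is uniformly continuous on the compact interval $[0,R]$, so I pick $\delta_i>0$ such that $|x-y|<\delta_i$ implies $|f_i(x)-f_i(y)|<\epsi/3$, and set $\delta = \min_i \delta_i$. Any $f \in \PP$ lies within $\epsi/3$ of some $f_i$, and the triangle inequality
\begin{equation*}
|f(x)-f(y)| \leq |f(x)-f_i(x)| + |f_i(x)-f_i(y)| + |f_i(y)-f(y)|
\end{equation*}
gives $|f(x)-f(y)|<\epsi$ whenever $|x-y|<\delta$, establishing equicontinuity.

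For the direction ($\Leftarrow$), which is the substantive part, I would use a diagonal extraction argument. Let $\{q_k\}_{k \geq 1}$ be a countable dense subset of $[0,R]$ (e.g. the rationals). For a sequence $(f_n) \subset \PP$, boundedness of $\PP$ implies the scalar sequence $(f_n(q_1))$ is bounded, so Bolzano-Weierstrass yields a subsequence $(f_n^{(1)})$ converging at $q_1$. Repeating on $q_2$ and iterating, I obtain nested subsequences $(f_n^{(k)})$ each converging at $q_1,\dots,q_k$; the diagonal sequence $\varphi_n := f_n^{(n)}$ then converges at every $q_k$.

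Finally I upgrade pointwise convergence on the dense set to uniform convergence on $[0,R]$. Given $\epsi>0$, equicontinuity furnishes $\delta>0$ with $|x-y|<\delta \Rightarrow |\varphi_n(x)-\varphi_n(y)|<\epsi/3$ for all $n$. Since $[0,R]$ is compact, cover it by finitely many $\delta$-balls centered at $q_{k_1},\dots,q_{k_M}$. For each $j$ the sequence $(\varphi_n(q_{k_j}))$ is Cauchy, so for $n,m$ large enough $|\varphi_n(q_{k_j})-\varphi_m(q_{k_j})|<\epsi/3$ for all $j \leq M$. For any $x \in [0,R]$, picking $q_{k_j}$ with $|x-q_{k_j}|<\delta$, the three-term triangle inequality gives $|\varphi_n(x)-\varphi_m(x)|<\epsi$ uniformly in $x$. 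Hence $(\varphi_n)$ is Cauchy in $L^\infty([0,R])$ and converges by completeness. The only mild obstacle is the bookkeeping of the diagonal extraction combined with the $\epsi/3$ argument; no analytic difficulty beyond that arises.
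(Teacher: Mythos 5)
The paper itself does not prove this statement; it simply cites the classical Arzel\`a--Ascoli theorem and moves on, using only the ``if'' direction (boundedness plus equicontinuity implies relative compactness). Your $(\Leftarrow)$ argument, diagonal extraction on a countable dense subset followed by the $\epsi/3$ upgrade from pointwise to uniform convergence, is the standard textbook proof and is correct.

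There is, however, a real gap in your $(\Rightarrow)$ direction, which comes from the fact that the statement is false as literally written for an arbitrary $\PP \subset L^\infty([0,R])$: a singleton $\{f\}$ with $f$ an essentially bounded function having no continuous representative is trivially compact but is not equicontinuous in any sense. Your argument silently sidesteps this when you ``cover $\overline{\PP}$ by finitely many balls of radius $\epsi/3$ centered at $f_1,\dots,f_N \in C([0,R])$.'' Total boundedness of $\overline{\PP}$ in $L^\infty$ gives you centers in $\overline{\PP}$, but nothing in the hypotheses forces those centers to be continuous, so invoking uniform continuity of the $f_i$ is unjustified. The fix is to state explicitly the hypothesis the application actually supplies, namely that $\PP \subset C([0,R])$ (here $\PP$ is the image of the unit ball under $\TT_R$, whose elements are continuous). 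Since $C([0,R])$ is closed in $L^\infty([0,R])$ -- a uniform limit of continuous functions is continuous -- this yields $\overline{\PP} \subset C([0,R])$, and your choice of continuous centers $f_i \in \overline{\PP}$ becomes legitimate. With that hypothesis made explicit, both directions go through by exactly the argument you give.
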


Thus we want to show that $\PP := \TT (\{ f \in L^\infty([0,R], | f |_\infty \leq 1 \})$ is a equicontinuous family of functions in $L^\infty$. Let $f \in \PP$. Then for all $0\leq r \leq r+h \leq R$,
\begin{align*}
|\TT f(r+h) - \TT f(r)| & \leq \disp{\int_0^R |f(u)| |\1_{u \geq r+h}(u^2-(r+h)^2)^{k/2-1}- \1_{u \geq r}(u^2-r^2)^{k/2-1}|udu} \\
   & \leq  | f |_\infty \disp{\int_0^R |\1_{u \geq r+h}(u^2-(r+h)^2)^{k/2-1}- \1_{u \geq r}(u^2-r^2)^{k/2-1}|udu }\\
   & := | f |_\infty I(h,r).
\end{align*}
We just want to estimate $I(h,r)$.
\begin{align*}
I(h,r) & \leq \disp{ \int_{r+h}^R | (u^2-(r+h)^2)^{k/2-1} - (u^2-r^2)^{k/2-1} | udu }\\ 
& \hspace{5 mm} + \disp{\int_0^R |\1_{u \geq r+h} - \1_{u \geq r} | (u^2-r^2)^{k/2-1} udu  }.
\end{align*}
With the change of variable $u^2 = v$, this gives
\begin{align*}
I(h,r) & \lesssim \disp{ \int_{(r+h)^2}^{R^2} | (v-(r+h)^2)^{k/2-1} - (v-r^2)^{k/2-1} | dv }\\ 
& \hspace{5 mm} + \disp{\int_{0}^{R^2} |\1_{v \geq (r+h)^2} - \1_{v \geq r^2} | (v-r^2)^{k/2-1} dv  } \\
 & \lesssim \disp{ \int_{(r+h)^2}^{R^2} | (v-(r+h)^2)^{k/2-1} - (v-r^2)^{k/2-1} | dv }\\ 
& \hspace{5 mm} + \disp{\int_{r^2}^{(r+h)^2} (v-r^2)^{k/2-1} dv  }.
\end{align*}
By Holder's inequality,
\begin{equation*}
\disp{\int_{r^2}^{(r+h)^2} (v-r^2)^{k/2-1} dv  } \leq h^{1/3} \left(\int_{r^2}^{R^2} \left((v-r^2)^{k/2-1}\right)^{3/2} dv \right)^{2/3}
\end{equation*}
and this converges to $0$ as $h$ converges to $0$. An estimate for the other term is 
\begin{align*}
\disp{ \int_{(r+h)^2}^{R^2}  | (v-(r+h)^2)^{k/2-1} -} & (v-r^2)^{k/2-1} | dv  \leq \disp{ \int_{(r+h)^2}^{R^2} \int_{r^2}^{(r+h)^2} \lv \dfrac{k}{2} - 1  \rv s |v-s|^{k/2-2} ds dv } \\
& \lesssim \disp{  \int_{r^2}^{(r+h)^2} \int_{(r+h)^2}^{R^2} s |v-s|^{k/2-2} dv ds }  \\
& \lesssim \disp{ \int_{r^2}^{(r+h)^2} s ((R^2-s)^{k/2-1} - ((r+h)^2-s)^{k/2-1}) ds } .
\end{align*}
Applying H\"older's inequality $\PP$ is equicontinuous and $\TT_R$ is compact.


\begin{thebibliography}{2}
\bibitem[BaeLos]{BaeLos} Baernstein, Albert, II and Loss, Michael. Some conjectures about $L^p$ norms of $k$-plane transforms. Rend. Sem. Mat. Fis. Milano 67 (1997), 9–26 (2000). 

\bibitem[BiaEgn]{BiaEgn}  Bianchi, Gabriele and Egnell, Henrik. A note on the Sobolev inequality. J. Funct. Anal. 100 (1991), no. 1, 18-24.

\bibitem[CheFraWet]{CheFraWet}  Chen, Shibing, Frank, Rupert L. and Weth, Tobias. Remainder terms in the fractional Sobolev inequality. Indiana Univ. Math. J. 62 (2013), no. 4, 1381-1397.

\bibitem[CiaFusMagPra]{CiaFusMagPra}  Cianchi, A., Fusco, N., Maggi, F. and Pratelli, A. The sharp Sobolev inequality in quantitative form. J. Eur. Math. Soc. (JEMS) 11 (2009), no. 5, 1105-1139. 

\bibitem[Christ1]{Christ1}  Christ, Michael. Estimates for the $k$-plane transform. Indiana Univ. Math. J. 33 (1984), no. 6, 891-910.

\bibitem[Christ2]{Christ2} Christ, Michael. Quazi-extremizers for a Radon-like transform. Preprint, arXiv:1106.0722.

\bibitem[Christ3]{Christ3} Christ, Michael. On extremals for a Radon-like transform. Preprint, arXiv:1106.0728.

\bibitem[Christ4]{Christ4} Christ, Michael. Extremizers of a Radon transform inequality. Preprint, arXiv:1106.0719.

\bibitem[Christ5]{Christ5} Christ, Michael. A sharpened Hausdorff-Young inequality. Preprint, arXiv:1406.1210.

\bibitem[Drouot]{Drouot} Drouot, Alexis. Sharp constant for a $k$-plane transform inequality. Analysis and PDE (2014), vol. 7, no 6, 1237-1252. 

\bibitem[Drury1]{Drury}  Drury, S. W. Generalizations of Riesz potentials and $L^p$ estimates for certain $k$-plane transforms. Illinois J. Math. 28 (1984), no. 3, 495-512.

\bibitem[Drury2]{Drury1}  Drury, S. W. $L^p$ estimates for certain generalizations of $k$-plane transforms. Illinois J. Math. 33 (1989), no. 3, 367-374.


\bibitem[Flock]{Flock} Flock, Taryn. Uniqueness of extremizers for an endpoint inequality of the $k$-plane transform. Preprint, arXiv:1307.6551.

\bibitem[Grafakos]{Grafakos} Grafakos, Loukas. Classical and Modern Fourier Analysis, Prentice Hall, NJ 2003.

\bibitem[Lieb]{Lieb} Lieb, Elliot. Sharp constants in the Hardy-Littlewood-Sobolev and related inequalities, Ann. Math. 118
(1983), 349-374.

\bibitem[Lions]{lions} Lions, Pierre-Louis. The concentration-compactness principle in the Calculus of Variations. The locally compact case, Part 1. Ann. I. H., Anal. Nonlin., 1 (1984), 109-145.

\bibitem[Mattila]{Mattila} Mattila, Pertti. Geometry of sets and measures in Euclidean spaces. Fractals and rectifiability. Cambridge Studies in Advanced Mathematics, 44. Cambridge University Press, Cambridge, 1995.


\bibitem[ObeSte]{ObeSte} Oberlin, D. M. and Stein, E. M. Mapping properties of the Radon transform. Indiana Univ. Math. J. 31 (1982), no. 5, 641-650. 

\bibitem[Samko]{Samko}  Samko, Stefan G. Hypersingular integrals and their applications. Analytical Methods and Special Functions, 5. Taylor \& Francis, Ltd., London, 2002.


\end{thebibliography}
\end{document}